\newcommand{\squaremi}{{\square_{i}}^{{\kern-2pt}-}}
\newcommand{\lozengemi}{{\lozenge_{i}}^{{\kern-2.5pt}-}}
\newcommand{\PhiIHplus}{{{\Phi}_{I,\mathscr{H}}}^{{\kern-14.5pt}+}\kern9pt}
\newcommand{\PhiIHminus}{{{\Phi}_{I,\mathscr{H}}}^{{\kern-14.5pt}-}\kern9pt}
\newcommand{\PhiIH}{\Phi_{I,\mathscr{H}}}
\newcommand{\PhiIHPF}{{{\Phi}_{I,\mathscr{H}}}^{{\kern-14.5pt}PF}\kern2pt}
\newtheorem{definition}{Definition}
\newtheorem{theorem}{Theorem}
\newtheorem{proposition}{Proposition}
\newtheorem{remark}{Remark}
\newtheorem{lemma}{Lemma}
\newtheorem{example}{Example}
\title{Hennessy-Milner Type Theorems for Fuzzy\\ Multimodal Logics Over Heyting Algebras}
\author{ \href{https://orcid.org/0000-0003-0563-4267}{\includegraphics[scale=0.06]{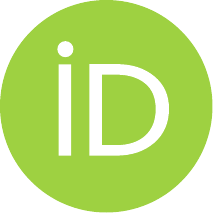}\hspace{1mm}Marko Stankovi\'{c}}\\
	University of Ni\v{s}\\
	Pedagogical Faculty in Vranje\\
	Partizanska 14, 17500 Vranje, Serbia \\
	\texttt{markos@pfvr.ni.ac.rs} \\
	\And
	\href{https://orcid.org/0000-0001-8625-4682}{\includegraphics[scale=0.06]{orcid.pdf}\hspace{1mm}Miroslav \'{C}iri\'{c}} \\
    University of Ni\v{s}\\
    Faculty of Sciences and Mathematics\\
    Vi\v segradska 33, 18000 Ni\v{s}, Serbia\\
	\texttt{miroslav.ciric@pmf.edu.rs} \\
    \And
	\href{https://orcid.org/0000-0002-9853-3993}{\includegraphics[scale=0.06]{orcid.pdf}\hspace{1mm}Jelena Ignjatovi\'{c}} \\
    University of Ni\v{s}\\
    Faculty of Sciences and Mathematics\\
    Vi\v segradska 33, 18000 Ni\v{s}, Serbia\\
	\texttt{jelena.ignjatovic@pmf.edu.rs} \\
}
\begin{document}
\maketitle

\begin{abstract}
In a recent paper, we have introduced two types of fuzzy simulations (forward and backward) and five types of fuzzy bisimulations (forward, backward, forward-backward, backward-forward and regular) between Kripke models for the fuzzy multimodal logics over a complete linearly ordered Heyting algebra. In this paper, for a given non-empty set $\Psi $ of modal formulae, we introduce the concept of a weak bisimulation between Kripke models. This concept can be used to express the degree of equality of fuzzy sets of formulae from $\Psi $ that are valid in two worlds $w$ and $w'$, that is, to express the degree of modal equivalence between worlds $w$ and $w'$ with respect to the formulae from $\Psi$. We prove several Hennessy-Milner type theorems. The first theorem determines that the greatest weak bisimulation for the set of plus-formulae between image-finite Kripke models coincides with the greatest forward bisimulation. The second theorem determines that the greatest weak bisimulation for the set of minus-formulae between domain-finite Kripke models coincides with the greatest backward bisimulation. Finally, the third theorem determines that the greatest weak bisimulation for the set of all modal formulae between the degree-finite Kripke models coincides with the greatest regular bisimulation.
\end{abstract}

\keywords{Fuzzy bisimulation\and fuzzy Kripke model\and fuzzy multimodal logic\and Hennessy-Milner property\and weak bisimulation\and weak simulation}

\section{Introduction}

Fuzzy logic is a form of multi-valued logic popularized by Zadeh's work \cite{Zadeh1965}, although it had been studied before by \L ukasiewicz and Tarski \cite{Lukasiewicz1930}. This approach shifts the paradigm from the standard set of Boolean truth values to a more general lattice from which the formula can take the truth value. Fuzzy first-order logic is obtained by applying the fuzzy approach to first order logic (cf.~\cite{Hajek1998,Novak1987}). Furthermore, after some early attempts to combine fuzzy logic and modal logic (see, for example, \cite{Schotch1976}), the development of fuzzy modal logic progressed rapidly (see \cite{Bou2011, Fitting1991, Ostermann1988, Priest2008}). A special type of fuzzy modal logic should be particularly emphasized - fuzzy description logic which has flourished over the last few decades (for a detailed survey, see \cite{Borgwardt2017}).

A significant milestone in the research of modal logics, automata, labelled transition systems (LTS), etc., is the introduction of \textit{bisimulations}. This is a multifaceted concept which offers some powerful tools for defining, understanding and reasoning about objects and structures that are common in mathematics and computer science. It could be said that bisimulations were introduced at about the same time in several areas independently. For example, in concurrency theory, the origin of bisimulations can be found in the works of R.~Milner \cite{Milner1980}, M.~Hennessy and R.~Milner \cite{HennessyMilner1980, HennessyMilner1985} and D.~Park \cite{Park1981}. Also, van Benthem in \cite{Benthem1977} defined bisimulation in the model theory of modal logic under the name of $p$-\textit{relations} or \textit{zig-zag} relations. For more information on the origins of bisimulations and their applications we refer to \cite{Sangiorgi2009, Sangiorgi2011}. Most researchers who have dealt with simulations and bisimulations for various types of relational systems have considered only forward simulations and forward bisimulations. They have used the names strong simulations and strong bisimulations, or just simulations and bisimulations (cf.~\cite{Fan2015, Milner1989, Milner1999, Roggenbach2000}). The greatest bisimulation equivalence is usually called a \textit{bisimilarity}.

Kripke models in modal logic, automata and the labelled transition systems are very similar syntactically. Looking from that perspective, the evaluation of the formulae in modal logic can be viewed as automata computation or computing LTS and vice versa. Therefore, the ideas and results from one theory can be taken into consideration in the other two.

Bearing this in mind, we have recently defined two types of simulations (forward and backward) and five types of bisimulations (forward, back\-ward, forward-back\-ward, backward-forward and regular) (see \cite{Stankovic2021}) between two fuzzy Kripke models. The definitions are based on \cite{Ciric2012} where two types of simulations and four types of bisimulations for the fuzzy finite automata have been studied. The fifth type of bisimulations, called regular bisimulations, originate from the research on the fuzzy social networks \cite{Ignjatovic2015}. What is more, we have created an algorithm that tests the existence of various types of simulation or bisimulation between the given Kripke models. We have also applied bisimulations in the state reduction of the fuzzy Kripke models, while preserving their semantic properties. It turns out that defining different types of bisimulations is not in vain. Namely, when forward, backward or regular bisimulation is fuzzy quasi-order, we have constructed the corresponding fuzzy Kripke model with smaller sets of worlds which is equivalent to the original one with respect to plus-formulae, minus-formulae and all formulae.

The Hennessy\-{-}Milner property, i.e., the property when modal equivalence coincides with bisimilarity for the image-finite or modally saturated models is well-known in modal logic. The question whether the Hennessy-Milner property holds for fuzzy modal logic is not the easy one, and remains mostly unexplored, although there are several papers on the subject. It is significant to mention the work of Fan (cf.~\cite{Fan2015}) who defined a fuzzy bisimulation for standard G\"{o}del modal logic and its extension with converse modalities and proved the Hennessy-Milner type theorem for these logics. Also, Eleftheriou et al.~\cite{Eleftheriou2012} examined the notion of bisimulations for many-valued modal languages over Heyting algebras. They defined notions like $t$-\textit{invariance}, $t$-\textit{bisimilarity} and also the notion of \textit{weak bisimulation}. In addition, they showed that for the image-finite models, $t$-invariance implies $t$-bisimilarity. We also need to mention other papers dealing with this subject such as \cite{Marti2018, Marti2014} where the Hennessy-Milner property was investigated for many-valued logic with a crisp accessibility relation; \cite{Bilkova2016} where the Hennessy-Milner property was investigated via coalgebraic methods; \cite{Diaconescu2020} where the Hennessy-Milner property was investigated for many-valued modal logic with a many-valued accessibility relation; as well as the research in fuzzy description logic \cite{Nguyen2019, Nguyen2021a, Nguyen2020}, etc.

Our primary goal is to prove several Hennessy-Milner type theorems for fuzzy multimodal logics over linearly ordered Heyting algebras. We introduce the concept of a weak bisimulation for a given non-empty set $\Psi $ of modal formulae, which can be used to express the degree of equality of fuzzy sets of formulae from $\Psi $ that are valid in two worlds $w$ and $w'$. In other words, they could be used to express the degree of modal equivalence between worlds $w$ and $w'$ with respect to the formulae from $\Psi$. We show that the greatest weak bisimulation for the set of plus-formulae between the image-finite Kripke models coincides with the greatest forward bisimulation. Furthermore, we show that the greatest weak bisimulation for the set of minus-formulae between domain-finite Kripke models coincides with the greatest backward bisimulation and that the greatest weak bisimulation for the set of all modal formulae between degree-finite Kripke models coincides with the greatest regular bisimulation. This means that, in cases such as these, the degrees of modal equivalences for plus-formulae, minus-formulae and all modal formulae can be expressed using the greatest forward, backward and regular bisimulations. 

These results are important for several reasons. The modal equivalence test for a given set of formulae comes down to computing the greatest weak bisimula\-tion corresponding to that set of formulae, which is generally a com\-pu\-ta\-tionally hard problem. Our results reduce such problems to the problems of computing the greatest forward, backward and regular bisimulations, for which efficient algorithms have been developed in \cite{Stankovic2021}.

The results obtained from this research may have various potential applications. Our modal language syntax is inter-translatable with the syntax of the fuzzy description logics (cf.~\cite{Bobillo2009, Bobillo2012, Hajek2005}), fuzzy temporal logic \cite{Conradie2020} and social network analysis \cite{Fan2013, Fan2014, Ignjatovic2015}. In fact, a weighted social network can easily be transformed into a Kripke model (see section 3 from \cite{Fan2014}). Therefore, the results presented in this paper can be applied in social network analysis, especially when regular equivalence computation is required.

The paper is organized into eight sections. The Introduction is followed by Section \ref{Preliminaries} which include some basic relevant definitions and notations for Heyting algebras, fuzzy sets and fuzzy relations. Section \ref{Section - Fuzzy Multimodal Logics} reviews the syntax and semantics for the fuzzy multimodal logics over a Heyting algebra. Section \ref{Section - Simulations and bisimulations} reviews two types of simulations and five types of bisimulations between two fuzzy Kripke models and defines the notions of weak simulations and weak bisimulations for some sets of formulae. The main results of the paper, the Hennessy-Milner type theorems are presented and proved in Section \ref{Section - Hennessy-Milner type theorems for fuzzy multimodal logics}. Section \ref{Section - Hennessy-Milner type theorems for Propositional Modal Logics} reformulates the theorems from the previous section for the special case of propositional modal logics. Then, Section \ref{Section - Computational examples} presents some computational examples which demonstrate applications of the results from Sections \ref{Section - Hennessy-Milner type theorems for fuzzy multimodal logics}, \ref{Section - Hennessy-Milner type theorems for Propositional Modal Logics} and \cite{Stankovic2021}. Finally, Section \ref{Concluding Remarks} contains some concluding remarks.

\section{Preliminaries}\label{Preliminaries}

Firstly, we will briefly list all the necessary terms and definitions from \cite{Stankovic2021}.
\begin{definition}\label{Definition of Heyting algebra}
An algebra $\mathscr{H}=(H, \wedge, \vee, \rightarrow, 0, 1)$ with three binary and two nullary operations is a \textit{Heyting algebra} if it satisfies:
\begin{itemize}
\item[{\rm (H1)}] $(H, \wedge, \vee)$ is a distributive lattice;
\item[{\rm (H2)}] $x\wedge 0 = 0$,\quad $x\vee 1 = 1$;
\item[{\rm (H3)}] $x\rightarrow x =1$;
\item[{\rm (H4)}] $(x\rightarrow y)\wedge y = y$,\quad $x\wedge (x\rightarrow y)=x\wedge y$;
\item[{\rm (H5)}] $x\rightarrow (y\wedge z)=(x\rightarrow y)\wedge (x\rightarrow z)$,\quad $(x\vee y)\rightarrow z=(x\rightarrow z)\wedge (y\rightarrow z)$.
\end{itemize}
\end{definition}

A binary operation $\rightarrow$ is called \textit{relative pseu\-do\-com\-ple\-men\-ta\-tion}, or \textit{re\-si\-du\-um}, in many sources. The \textit{relative pseu\-do\-com\-ple\-ment $x\rightarrow z$ of $x$ with respect to $z$} can be characterized as follows:
\begin{equation}
x\rightarrow z =\bigvee \bigl\{y\in H\mid x\wedge y \leqslant z\bigr\}\mbox{.}
\end{equation}
Equivalently, we say that operations $\wedge$ and $\rightarrow$ form an \textit{adjoint pair}, i.e., they satisfy the \textit{adjunction property} or \textit{resi\-duation property}: for all $x,y, z \in H$,
\begin{equation}\label{adjunction property in Heyting algebra}
x\wedge y \leqslant z \qquad \Leftrightarrow \qquad x\leqslant y\rightarrow z\mbox{.}
\end{equation}
If, in addition, $(H, \wedge, \vee, 0, 1)$ is a complete lattice, then $\mathscr{H}$ is called a \textit{complete Heyting algebra}. In the rest of the paper, unless otherwise stated, $\mathscr{H} = (H,\wedge,\vee,\rightarrow, 0, 1)$ stands for a complete Heyting algebra.

The operation $\leftrightarrow$ defined by
\begin{equation}\label{definition of biimplication}
x\leftrightarrow y = (x\rightarrow y) \wedge (y\rightarrow x)\mbox{,}
\end{equation}
called \textit{bi-implication}, is used for modeling the equivalence of truth values.

Now we can define \textit{fuzzy subset}, \textit{fuzzy relations} and other terms with their properties over $\mathscr{H}$. Also, it is generally known that the Heyting algebra $\mathscr{H}=(H, \wedge, \vee, \rightarrow, 0, 1)$ can be defined as a commutative residuated lattice $\mathscr{H}=(H, \wedge, \vee, \otimes, \rightarrow, 0, 1)$ in which operation $\otimes$ coincides with $\wedge$. Therefore, the following definitions and terminology are based on \cite{Belohvalek2002, Belohvalek2005} where they are given for a residuated lattice. 

\begin{definition}\label{Definition - fuzzy subset}
A \textit{fuzzy subset} of a set $A$ \textit{over} $\mathscr{H}$, or simply a \textit{fuzzy subset} of $A$ is a function from $A$ to $H$. Ordinary crisp subsets of $A$ are considered fuzzy subsets of $A$ taking membership values in the set $\{0,1\}\subseteq H$.
\end{definition}

Let $f$ and $g$ be two fuzzy subsets of $A$. The \textit{equality} of $f$ and $g$ is defined as the usual equality of functions, i.e., $f=g$ if and only if $f(x)=g(x)$, for every $x\in A$. The \textit{inclusion} $f\leqslant g$ is also defined as usual: $f\leqslant g$ if and only if $f(x) \leqslant g(x)$, for every $x\in A$. With this partial order, the set $\mathscr{F}(A)$ of all fuzzy subsets of $A$ forms a complete Heyting algebra, in which the meet (intersection) $\bigwedge_{i \in I}f_i$ and the join (union) $\bigvee_{i\in I} f_i$ of an arbitrary family $\{f_i\}_{i \in I}$ of fuzzy subsets of $A$ are functions from $A$ to $H$ defined by
\begin{equation*}
\left(\bigwedge_{i \in I} f_i\right)(x) = \bigwedge_{i \in I} f_i (x)\mbox{,} \qquad \left(\bigvee_{i \in I} f_i\right)(x) = \bigvee_{i \in I} f_i (x)\mbox{.}
\end{equation*}
Note that the equality, inclusion, meet and join of fuzzy sets are all defined pointwise. We can define the \textit{product} $f \wedge g$ to be the same as the binary meet: $f\wedge g (x) = f(x) \wedge g(x)$, for every $x\in A$, due to the relationship between a Heyting algebra and a residuated lattice.

\begin{definition}
Let $A$ and $B$ be non-empty sets. A \textit{fuzzy relation between sets} $A$ and $B$ (in this order) is a function from $A\times B$ to $H$, i.e., a fuzzy subset of $A\times B$, and the equality, inclusion (ordering), joins and meets of fuzzy relations are defined as in the case of fuzzy sets.
\end{definition}
In particular, a \textit{fuzzy relation on a set} $A$ is a function from $A\times A$ to $H$, i.e., a fuzzy subset of $A\times A$. The set of all fuzzy relations from $A$ to $B$ will be denoted by $\mathscr{R}(A, B)$, and the set of all fuzzy relations on a set $A$ will be denoted by $\mathscr{R}(A)$. The \textit{inverse} of a fuzzy relation $\varphi \in \mathscr{R}(A, B)$ is a fuzzy relation $\varphi^{-1} \in \mathscr{R}(B, A)$ defined by $\varphi^{-1}(b,a)=\varphi(a,b)$, for all $a\in A$ and $b\in B$. A \textit{crisp relation} is a fuzzy relation which takes values only in the set $\{0,1\}$, and if $\varphi$ is a crisp relation of $A$ to $B$, then the expressions ``$\varphi(a,b)=1$'' and ``$(a,b)\in \varphi$'' will have the same meaning.

\begin{definition}
For non-empty sets $A$, $B$ and $C$, and fuzzy relations $\varphi\in \mathscr{R}(A, B)$ and $\psi\in \mathscr{R}(B,C)$, their \textit{composition} $\varphi \circ \psi$ is a fuzzy relation from $\mathscr{R}(A,C)$ defined by
\begin{equation}
(\varphi\circ \psi)(a,c)=\bigvee_{b\in B}\varphi(a,b)\wedge \psi(b,c)\mbox{,}\label{composition of fuzzy relations}
\end{equation}
for all $a \in A$ and $c \in C$.
\end{definition}
If $\varphi$ and $\psi$ are crisp relations, then $\varphi \circ \psi$ is an ordinary composition of relations, i.e.,
\begin{equation*}
\varphi \circ \psi = \{(a,c)\in A\times C \mid (\exists b \in B)(a,b)\in \varphi\; \&\;(b,c)\in \psi\}\mbox{,}
\end{equation*}
and if $\varphi$ and $\psi$ are functions, then $\varphi\circ \psi$ is the ordinary composition of functions, i.e., $(\varphi \circ \psi)(a) = \psi(\varphi(a))$, for every $a\in A$.
\begin{definition}
Let $f \in \mathscr{F}(A)$, $\varphi \in \mathscr{R}(A, B)$ and $g \in \mathscr{F}(B)$. The compositions $f \circ \varphi$ and $\varphi \circ g$ are fuzzy subsets of $B$ and $A$, respectively, which are defined by
\begin{equation}\label{composition of fuzzy set and fuzzy relation}
(f\circ \varphi)(b)=\bigvee_{a\in A}f(a)\wedge \varphi(a,b)\mbox{,}\qquad
(\varphi\circ g)(a)=\bigvee_{b\in B} \varphi(a,b)\wedge g(b)\mbox{,}
\end{equation}
for every $a \in A$ and $b \in B$.
\end{definition}
In particular, if $f$ and $g$ are crisp sets and $\varphi$ is a crisp relation, then
\begin{equation*}
f\circ \varphi = \{b\in B\mid (\exists a \in f)(a,b)\in \varphi\}\mbox{,}\qquad
\varphi \circ g = \{a\in A \mid (\exists b\in g)(a,b)\in \varphi\}\mbox{.}
\end{equation*}

\begin{definition}
Let $f,g\in \mathscr{F}(A)$. The composition $f\circ g$ is an element of a fuzzy set $A$, defined by
\begin{equation}
f\circ g = \bigvee_{a\in A} f(a)\wedge g(a)\mbox{.}
\end{equation}
\end{definition}

We note that if $A,B$ and $C$ are finite sets of cardinality $|A|=k$, $|B|=m$ and $|C|=n$, then $\varphi \in \mathscr{R}(A, B)$ and $\psi \in \mathscr{R}(B, C)$ can be treated as $k\times m$ and $m\times n$ fuzzy matrices over $\mathscr{H}$, and $\varphi \circ \psi$ is the matrix product. Analogously, for $f\in \mathscr{F}(A)$ and $g\in \mathscr{F}(B)$ we can treat $f\circ \varphi$ as the product of a $1\times k$ matrix $f$ and a $k\times m$ matrix $\varphi$, and $\varphi\circ g$ as the product of a $k\times m$ matrix $R$ and an $m\times 1$ matrix $g^t$ (the transpose of $g$).

The following lemmas give the basic properties of the composition of fuzzy relations and fuzzy subsets.
\begin{lemma}
Let $A,B,C$ and $D$ be non-empty sets. Then we have:
\begin{itemize}
\item [{\rm a)}] For $\varphi_1 \in \mathscr{R}(A,B)$, $\varphi_2 \in \mathscr{R}(B, C)$ and $\varphi_3\in \mathscr{R}(C, D)$ we have
\begin{equation*}\label{fuzzy relations compositions are associative}
(\varphi_1 \circ \varphi_2)\circ \varphi_3 = \varphi_1 \circ (\varphi_2 \circ \varphi_3)\mbox{.}
\end{equation*}
\item [{\rm b)}] For $\varphi_0 \in \mathscr{R}(A, B)$, $\varphi_1, \varphi_2 \in \mathscr{R}(B, C)$ and $\varphi_3 \in \mathscr{R}(C, D)$ we have that $\varphi_1 \leqslant \varphi_2$ implies $\varphi_1 ^{-1}\leqslant \varphi_2 ^{-1}$, $\varphi_0 \circ \varphi_1 \leqslant \varphi_0 \circ \varphi_2$ and $\varphi_1 \circ \varphi_3 \leqslant \varphi_2 \circ \varphi_3$.
\item [{\rm c)}] For a $\varphi \in \mathscr{R}(A, B)$, $\psi \in \mathscr{R}(B, C)$, $f\in \mathscr{F}(A)$, $g\in \mathscr{F}(B)$ and $h\in \mathscr{F}(C)$ the following holds:
\[
(f\circ \varphi)\circ \psi = f\circ (\phi\circ \psi)\mbox{,}\qquad
(f\circ \varphi)\circ g = f \circ (\varphi \circ g)\mbox{,}\qquad
(\varphi\circ \psi)\circ h = \varphi \circ (\psi \circ h)\mbox{.}
\]
\end{itemize}
\end{lemma}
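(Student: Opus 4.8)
The plan is to reduce every claim to two facts about the complete Heyting algebra $\mathscr{H}$: that binary meet is associative and commutative (part of (H1)), and — the only substantive ingredient — that $\wedge$ distributes over arbitrary joins, i.e. $x\wedge\bigvee_{i\in I}y_i=\bigvee_{i\in I}(x\wedge y_i)$ for every $x\in H$ and every family $\{y_i\}_{i\in I}\subseteq H$. This infinite distributive law is a direct consequence of the adjunction property \eqref{adjunction property in Heyting algebra}: since $x\wedge-$ has the right adjoint $x\to-$, it preserves all existing joins. I would record this as a preliminary observation (or simply cite it as standard) and then use it freely, together with the elementary fact that an iterated supremum $\bigvee_{b\in B}\bigvee_{c\in C}$ over a product index set equals the single supremum $\bigvee_{(b,c)\in B\times C}$ and may hence be evaluated in either order.

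For part a), I would unfold both sides at an arbitrary pair $(a,d)\in A\times D$. Expanding $((\varphi_1\circ\varphi_2)\circ\varphi_3)(a,d)$ by \eqref{composition of fuzzy relations} gives $\bigvee_{c\in C}\bigl(\bigvee_{b\in B}\varphi_1(a,b)\wedge\varphi_2(b,c)\bigr)\wedge\varphi_3(c,d)$; one application of the infinite distributive law pushes $\wedge\,\varphi_3(c,d)$ inside the inner join, and associativity of $\wedge$ yields $\bigvee_{(b,c)\in B\times C}\varphi_1(a,b)\wedge\varphi_2(b,c)\wedge\varphi_3(c,d)$. The symmetric expansion of $(\varphi_1\circ(\varphi_2\circ\varphi_3))(a,d)$ (now distributing $\varphi_1(a,b)\wedge-$ into the join over $c$) produces the same double supremum, so the two functions agree pointwise.

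For part b), everything is pointwise and monotone. The inverse is immediate: $\varphi_1\leqslant\varphi_2$ means $\varphi_1(b,c)\leqslant\varphi_2(b,c)$ for all $b,c$, hence $\varphi_1^{-1}(c,b)=\varphi_1(b,c)\leqslant\varphi_2(b,c)=\varphi_2^{-1}(c,b)$. For the composition inequalities I would use that $\wedge$ is isotone in each argument and that $\bigvee$ is isotone termwise: from $\varphi_1(b,c)\leqslant\varphi_2(b,c)$ we get $\varphi_0(a,b)\wedge\varphi_1(b,c)\leqslant\varphi_0(a,b)\wedge\varphi_2(b,c)$ for every $b$, and taking joins over $b$ gives $(\varphi_0\circ\varphi_1)(a,c)\leqslant(\varphi_0\circ\varphi_2)(a,c)$; the inequality $\varphi_1\circ\varphi_3\leqslant\varphi_2\circ\varphi_3$ is analogous.

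For part c) there is a slick route and a pedestrian one. The pedestrian one repeats the computation of part a) verbatim for each of the three identities, using \eqref{composition of fuzzy set and fuzzy relation} (and the definition of $f\circ g$) in place of \eqref{composition of fuzzy relations} wherever a fuzzy subset appears; each identity again collapses both sides to a common double supremum. The slick route observes that a fuzzy subset of $A$ is the same thing as a fuzzy relation between a fixed one-element set and $A$ (or between $A$ and a one-element set), under which \eqref{composition of fuzzy set and fuzzy relation} and the composition $f\circ g$ are exactly instances of \eqref{composition of fuzzy relations}; then all three identities in c) become special cases of part a), and I would present this version with a one-line remark. As for difficulty, there is no real obstacle: the proof is entirely formal once the infinite distributive law is in hand, so the only point requiring care is notational hygiene — keeping the roles of $A,B,C,D$ straight and not confusing the two directions of the mixed compositions in c).
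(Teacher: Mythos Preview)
Your proof is correct and complete; each step reduces to the infinite distributive law (equivalently, the adjunction \eqref{adjunction property in Heyting algebra}) together with associativity and commutativity of $\wedge$, exactly as you outline. The paper itself states this lemma without proof, treating it as standard background drawn from the fuzzy-relational literature (cf.\ the references to B{\v e}lohl{\' a}vek), so there is no approach to compare against---your write-up simply fills in what the authors left implicit.
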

 
Consequently, in the previous lemma, the parentheses in a) and c) can be omitted.

\begin{lemma}
For all $\varphi, \varphi_i \in\mathscr{R}(A, B)(i\in I)$ and $\psi, \psi_i \in \mathscr{R}(B, C)\allowbreak(i\in I)$ we have that
\begin{equation}
(\varphi\circ \psi)^{-1} = \psi^{-1}\circ \varphi^{-1}\mbox{,}
\end{equation}
\begin{equation}
\varphi \circ \left(\bigvee_{i\in I}\psi_i\right) = \bigvee_{i\in I}(\varphi \circ \psi_i)\mbox{,}\qquad \left(\bigvee_{i\in I}\varphi_i\right) \circ \psi = \bigvee_{i\in I}(\varphi_i\circ \psi)\mbox{,}
\end{equation}
\begin{equation}
\left(\bigvee_{i\in I}\varphi_i\right)^{-1}= \bigvee_{i\in I}\varphi_i ^{-1}\mbox{.}
\end{equation}
\end{lemma}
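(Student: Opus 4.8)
The plan is to verify each of the three identities by unfolding the relevant definitions (composition, inverse, pointwise join) and reducing everything to an identity in the underlying complete Heyting algebra $\mathscr{H}$. All three are pointwise statements about functions into $H$, so it suffices to fix arbitrary arguments and compare the two sides value by value.

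For the first identity, I would fix $a \in A$ and $c \in C$ and compute, by the definition of the inverse and of composition,
\[
(\varphi\circ\psi)^{-1}(c,a) = (\varphi\circ\psi)(a,c) = \bigvee_{b\in B}\varphi(a,b)\wedge\psi(b,c),
\]
and, on the other side,
\[
(\psi^{-1}\circ\varphi^{-1})(c,a) = \bigvee_{b\in B}\psi^{-1}(c,b)\wedge\varphi^{-1}(b,a) = \bigvee_{b\in B}\psi(b,c)\wedge\varphi(a,b),
\]
so that the two right-hand sides coincide by commutativity of $\wedge$ in $\mathscr{H}$.

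For the second pair of identities, I would again fix $a\in A$, $c\in C$ and expand
\[
\Bigl(\varphi\circ\bigvee_{i\in I}\psi_i\Bigr)(a,c) = \bigvee_{b\in B}\Bigl(\varphi(a,b)\wedge\bigvee_{i\in I}\psi_i(b,c)\Bigr).
\]
The one step that really uses the structure of $\mathscr{H}$ is the infinite distributivity law $x\wedge\bigvee_{j}y_j = \bigvee_{j}(x\wedge y_j)$, which holds in every complete Heyting algebra as an immediate consequence of the adjunction property \eqref{adjunction property in Heyting algebra} (since $(-)\wedge x$ has the right adjoint $x\rightarrow(-)$ and hence preserves arbitrary joins). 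Applying it inside the outer join and then interchanging the two joins (associativity and commutativity of $\bigvee$ in the complete lattice) gives
\[
\bigvee_{i\in I}\bigvee_{b\in B}\bigl(\varphi(a,b)\wedge\psi_i(b,c)\bigr) = \bigvee_{i\in I}(\varphi\circ\psi_i)(a,c),
\]
which is the claimed equality; the right-distributive version is entirely symmetric.

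Finally, the third identity is the most direct: for $a\in A$ and $b\in B$,
\[
\Bigl(\bigvee_{i\in I}\varphi_i\Bigr)^{-1}(b,a) = \Bigl(\bigvee_{i\in I}\varphi_i\Bigr)(a,b) = \bigvee_{i\in I}\varphi_i(a,b) = \bigvee_{i\in I}\varphi_i^{-1}(b,a) = \Bigl(\bigvee_{i\in I}\varphi_i^{-1}\Bigr)(b,a),
\]
using only the definition of the inverse and the fact that joins of fuzzy relations are computed pointwise. There is no genuine obstacle here; the only place where the specific axioms of $\mathscr{H}$ (rather than those of an arbitrary complete lattice) are needed is the infinite distributivity step in the middle identity, so I would make sure to invoke the adjunction property explicitly at that point.
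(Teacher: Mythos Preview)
Your proof is correct; each identity is verified by a straightforward pointwise computation, with the infinite distributivity of $\wedge$ over $\bigvee$ (via the adjunction property) being the only substantive step. The paper itself states this lemma without proof, treating it as a standard fact about fuzzy relations over a complete Heyting algebra, so there is nothing further to compare.
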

\begin{definition}\label{Definition - image, domain and degree finite relation}
Let $A$ and $B$ be fuzzy sets. A fuzzy relation $\varphi\in \mathscr{R}(A, B)$ is called \textit{image-finite} if for every $a\in A$ the set $\{b\in B\mid \varphi(a,b)>0\}$ is finite, it is called \textit{domain-finite} if for every $b\in B$ the set $\{a\in A\mid \varphi(a,b)>0\}$ is finite, and it is called \textit{degree-finite} if it is both image-finite and domain-finite.
\end{definition}

\section{Fuzzy multimodal logics}\label{Section - Fuzzy Multimodal Logics}
In \cite{Stankovic2021} we introduced a fuzzy multimodal logic over a complete Heyting algebra, and here we will give a brief overview of the relevant definitions.

\begin{definition}\label{Definition - Language over H}
Let $\mathscr{H}=(H, \wedge, \vee, \rightarrow, 0, 1)$ be a complete Heyting algebra and write $\overline{H}=\{\overline{t}\mid t\in H\}$ for the elements of $\mathscr{H}$ viewed as constants. Let $I$ be some index set. Define the language $\Phi_{I,\mathscr{H}}$ via the grammar
\[
A::=\overline{t}\mid p\mid A\wedge A\mid A\rightarrow A\mid \square_i A\mid \lozenge_i A\mid \squaremi A\mid \lozengemi A
\]
where $\overline{t}\in \overline{H}$, $i\in I$ and $p$ ranges over some set $PV$ of proposition letters.
\end{definition}

The following well-known abbreviations will be used:
\begin{flalign*}
&\neg A \equiv A\rightarrow \overline{0} \mbox{ (\textit{negation})},&&\\
&A \leftrightarrow B \equiv (A\rightarrow B)\wedge (B\rightarrow A) \mbox{ (\textit{equivalence}),}&&\\
& A\vee B \equiv ((A\rightarrow B)\rightarrow B)\wedge ((B\rightarrow A)\rightarrow A) \mbox{ (\textit{disjunction}).}&&
\end{flalign*}
Recall that $0$ is the least element in $\mathscr{H}$ and $\overline{0}$ is the corresponding truth constant. Also, $\overline{0}\rightarrow \overline{0}$ gives $\overline{1}$.

The set of all formulae over the alphabet $\mathscr{H}(\{\square_i, \lozenge_i\}_{i\in I})$, i.e., the set of those formulae from $\PhiIH$ that do not contain any of the modal operators $\squaremi$ and $\lozengemi$, $i\in I$, will be denoted by $\PhiIHplus$. Similarly, the set of all formulae over the alphabet $\mathscr{H}(\{{\squaremi},{\lozengemi}\}_{i\in I})$, i.e., the set of those formulae from $\PhiIH$ that do not contain any of the modal operators $\square_i$ and $\lozenge_i$, $i\in I$, will be denoted by $\PhiIHminus$. To simplify, the formulae from $\PhiIHplus$ will be called \textit{plus-formulae}, and the formulae from $\PhiIHminus$ will be called \textit{minus-formulae}.

\begin{definition}\label{Definition - fuzzy Kripke frame}
A \textit{fuzzy Kripke frame} is a structure $\mathfrak{F} = (W,\{R_i\}_{i\in I})$ where $W$ is a non-empty set of possible \textit{worlds} (or \textit{states} or \textit{points}) and $R_i\in \mathscr{F}(W\times W)$ is a binary fuzzy relation on $W$, for every $i$ from a finite index set $I$, called the \textit{accessibility fuzzy relation} of the frame.
\end{definition}

\begin{definition}\label{Definition - fuzzy Kripke model}
A \textit{fuzzy Kripke model for $\PhiIH$\/} is a structure $\mathfrak{M}=(W,\allowbreak \{R_i\}_{i\in I},\allowbreak V)$ such that $(W,\{R_i\}_{i\in I})$ is a fuzzy Kripke frame and $V: W\times (PV \cup \overline{H}) \rightarrow H$ is a truth assignment function, called the \textit{evaluation of the model}, which assigns an $H$-truth value to propositional variables (and truth constants) in each world, such that $V(w,\overline{t})=t$, for every $w\in W$ and $t\in H$.
\end{definition}

In the case when the finite set $I$ has $n$ elements, then $\mathfrak{F}$ is called a \textit{fuzzy Kripke $n$-frame} and $\mathfrak{M}$ is called a \textit{fuzzy Kripke $n$-model}.

The truth assignment function $V$ can be inductively extended to a function $V:W\times \PhiIH \rightarrow H$ by:
\begin{flalign*}
\textrm{(V1) }&V(w, A\wedge B)=V(w, A)\wedge V(w, B);&&\\
\textrm{(V2) }&V(w, A\rightarrow B)=V(w, A)\rightarrow V(w, B);&&\\
\textrm{(V3) }&V(w, \square_{i} A)=\bigwedge_{u\in W} R_{i}(w,u)\rightarrow V(u, A)\mbox{, for every }i\in I;&&\\
\textrm{(V4) }&V(w, \lozenge_{i} A)=\bigvee_{u\in W} R_{i}(w, u)\wedge V(u, A)\mbox{, for every }i\in I;&&\\
\textrm{(V5) }&V(w, \squaremi A)=\bigwedge_{u\in W} R_{i}(u,w)\rightarrow V(u, A)\mbox{, for every }i\in I;&&\\
\textrm{(V6) }&V(w, \lozengemi A)=\bigvee_{u\in W} R_{i}(u, w)\wedge V(u, A)\mbox{, for every }i\in I.&&
\end{flalign*}
For each world $w\in W$, the truth assignment $V$ determines a function $V_w:\PhiIH\to H$ given by $V_w(A)=V(w,A)$, for every $A\in \PhiIH$, and similarly, for each $A\in \PhiIH $, the truth assignment $V$ determines a function $V_A:W\to H$ given by $V_A(w)=V(w,A)$, for every $w\in W$.

As a rule, we denote the models with $\mathfrak{M}$, $\mathfrak{M}'$, $\mathfrak{N}$, $\mathfrak{N}'$ etc., not emphasizing the alphabet $\mathscr{H}(\{\square_i,\lozenge_i,\squaremi,\lozengemi\}_{i\in I})$ specifically, except when necessary.
For a fuzzy Kripke model $\mathfrak{M}=(W, \{R_{i}\}_{i\in I}, V)$, its \textit{reverse fuzzy Kripke model} is the fuzzy Kripke model $\mathfrak{M}^{-1}=(W, \{R^{-1}_{i}\}_{i\in I}, V)$.

The following Definition is based on Definition \ref{Definition - image, domain and degree finite relation} which defines image-finite, domain-finite and degree-finite relations.
\begin{definition}
A fuzzy Kripke model $\mathfrak{M}=(W, \{R_{i}\}_{i\in I}, V)$ is called \textit{image-finite\/} if the relation $R_i$ is image-finite, for every $i\in I$, it is called \textit{domain-finite\/} if the relation $R_i$ is domain-finite, for every $i\in I$, and it is called \textit{degree-finite\/} if the relation $R_i$ is degree-finite, for every $i\in I$.
\end{definition}

\begin{definition}\label{Definition - formulae-equivalent}
Let $\mathfrak{M}=(W, \{R_i\}_{i\in I}, V)$ and $\mathfrak{M}'=(W', \{R_i'\}_{i\in I}, V')$ be two fuzzy Kripke models, and let $\Phi \subseteq \PhiIH$ be some set of formulae. The worlds $w\in W$ and $w'\in W'$ are said to be \textit{$\Phi$-equivalent} if $V(w,A)=V'(w',A)$, for all $A\in \Phi$. Moreover, $\mathfrak{M}$ and $\mathfrak{M}'$ are said to be \textit{$\Phi$-equivalent fuzzy Kripke models} if each $w\in W$ is $\Phi$-equivalent to some $w'\in W'$, and vice versa, if each $w'\in W'$ is $\Phi $-equivalent to some $w\in W$.
\end{definition}

Many authors use the term \textit{modal equivalence} for the relation between two worlds defined as follows: two worlds $w\in W$ and $w'\in W$ are \textit{modally equivalent} if $V(w,A)=V'(w',A)$, where $A$ is from the set of all formulae (cf.~\cite{Blackburn2001, Diaconescu2020}). Therefore, Definition \ref{Definition - formulae-equivalent} is more general since the notion of formulae equivalence can be defined for some set of formulae.

\section{Simulations and bisimulations}\label{Section - Simulations and bisimulations}
\subsection{Simulations and bisimulations}\label{subs:sim.bisim}

In a fuzzy modal logic, fuzzy simulation relates a fuzzy Kripke model to an \textit{abstraction} of the model where the abstraction of the model might have a smaller set of worlds. Hence, the fuzzy Kripke model is related to his abstraction in such a way that every local property and transition patterns of worlds, relevant to the simulation requirement, are preserved. Therefore, fuzzy bisimulations guarantee that two fuzzy Kripke models have the same local properties and transition patterns.

Two types of simulations and five types of bisimulations between two fuzzy Kripke models are defined in \cite{Stankovic2021}. The definitions are based on \cite{Ciric2012} in which two types of simulations and four types of bisimulations for fuzzy finite automata have been studied. The fifth type of bisimulations, called regular bisimulations, originates from the research on fuzzy social networks \cite{Ignjatovic2015}.

In the crisp case, a bisimulation preserves logical equivalence, i.e., modal formulae are invariant under bisimulation (see Theorem 2.20 from \cite{Blackburn2001}). We consider three defined sets of modal formulae (plus-formulae, minus-formulae and all formulae), so we need one type of bisimulation for each set of formulae. Also, there are two mixed types of bisimulations, which makes for a total of five types of bisimulations that we define.

Now, we will briefly outline the definitions.

\begin{definition}\label{Definition - forward and backward simulation}
Let $\mathfrak{M}=(W, \{R_{i}\}_{i\in I}, V)$ and $\mathfrak{M}'=(W', \{R_{i}'\}_{i\in I}, V')$ be two fuzzy Kripke models and let $\varphi\in \mathscr{R}(W, W')$ be a non-empty fuzzy relation. If $\varphi $ satisfies
\begin{align}
& V_p \leqslant V_p '\circ \varphi^{-1}\mbox{,}\qquad\mbox{ for every } p\in PV\mbox{,}\label{forward simulation -1} \tag{\textit{fs-1}}\\
& \varphi^{-1} \circ R_{i} \leqslant R_{i}' \circ \varphi^{-1}\mbox{,}\qquad\mbox{for every } i\in I\mbox{,} \label{forward simulation -2} \tag{\textit{fs-2}} \\
& \varphi^{-1}\circ V_p \leqslant V_p'\mbox{,}\qquad\mbox{for every } p\in PV\mbox{,} \label{forward simulation -3} \tag{\textit{fs-3}}
\end{align}
then it is called a \textit{forward simulation} between $\mathfrak{M}$ and $\mathfrak{M}'$, and if it satisfies only \eqref{forward simulation -2} and \eqref{forward simulation -3}, then it is called a \textit{forward pre\-simulation} between $\mathfrak{M}$ and $\mathfrak{M}'$.

On the other hand, if $\varphi $ satisfies
\begin{align}
& V_p \leqslant \varphi\circ V_p ' \mbox{,}\qquad \mbox{for every } p\in PV\mbox{,}\label{backward simulation -1} \tag{\textit{bs-1}}\\
& R_{i} \circ \varphi \leqslant \varphi \circ R_{i}'\mbox{,}\qquad\mbox{for every } i\in I\mbox{,} \label{backward simulation -2} \tag{\textit{bs-2}}\\
& V_p \circ \varphi \leqslant V_p'\mbox{,}\qquad \mbox{for every } p\in PV\mbox{,} \label{backward simulation -3} \tag{\textit{bs-3}}
\end{align}
then it is called a \textit{backward simulation} between $\mathfrak{M}$ and $\mathfrak{M}'$, and if it satisfies only \eqref{backward simulation -3} and \eqref{backward simulation -2}, it is called a \textit{backward presimulation} between $\mathfrak{M}$ and $\mathfrak{M}'$.
\end{definition}

The meaning of forward and backward simulations can best be explained in the case when $\mathfrak{M}$ and $\mathfrak{M}'$ are crisp (Boolean-valued) Kripke models and $\varphi $ is an ordinary crisp (Boolean-valued) binary relation. The condition \eqref{forward simulation -1} means that if the valuation $V$ assigns the value ``true'' to the propositional variable $p$ in some world $w$, then the valuation $V'$ assigns to this variable the value ``true'' in some world $w'$ which simulates $w$. On the other hand, the condition \eqref{forward simulation -3} means that if $w'$ simulates $w$ and the valuation $V$ assigns the value ``true'' to the propositional variable $p$ in the world $w$, then the valuation $V'$ also assigns the value ``true'' to this variable in the world $w'$. The conditions \eqref{forward simulation -2} and \eqref{backward simulation -2} can be explained as follows: \eqref{forward simulation -2} means that if $u'$ simulates $u$ and $v$ is accessible from $u$, then there is $v'$ accessible from $u'$ which simulates $v$, and \eqref{backward simulation -2} means that if $u$ is accessible from $v$ and $u'$ simulates $u$, then $u'$ is accessible from some $v'$ which simulates $v$. This is explained in Figure \ref{figure}. In both cases, accessibility is considered with respect to $R_i$, for each $i\in I$.
\begin{figure}[!ht]
\centering
\begin{tikzpicture}
\tikzstyle{arrow} = [thick, ->,>=latex]
\draw [arrow](0,3) node[circle,fill,inner sep=1pt,label=above left:$u$](u){} --node[anchor=south] {$\varphi$} (3,3)node[circle,fill, inner sep=1pt,label=above right:$u'$](u'){};

\draw [arrow](0,3) (u){} --node[anchor=east] {$R_i$} (0,0)node[circle,fill,inner sep=1pt,label=below left:$v$](v){};

\draw [arrow](6,3) node[circle,fill,inner sep=1pt,label=above left:$u$](u){} --node[anchor=south] {$\varphi$} (9,3)node[circle,fill,inner sep=1pt,label=above right:$u'$](u'){};

\draw [arrow](6,0) node[circle,fill,inner sep=1pt,label=below left:$v$](v){} --node[anchor=east] {$R_i$} (6,3)node(u){};

\tikzstyle{arrow} = [dashed, ->,>=latex]

\draw [arrow](3,3) node(u'){} --node[anchor=west] {$R_i '$} (3,0)node[circle,fill,inner sep=1pt,label=below right:$v'$](v'){};

\draw [arrow](0,0) node(v){} --node[anchor=north] {$\varphi$} (3,0)node(v'){};

\draw [arrow](9,0) node[circle,fill,inner sep=1pt,label=below right:$v'$](v'){} --node[anchor=west] {$R_i '$} (9,3)node(u'){};

\draw [arrow](6,0) node(v){} --node[anchor=north] {$\varphi$} (9,0)node(v'){};
\end{tikzpicture}
\caption{Forward simulation (the condition \eqref{forward simulation -2}, on the left) and backward simulation (the condition \eqref{backward simulation -2}, on the right).}
\label{figure}
\end{figure}
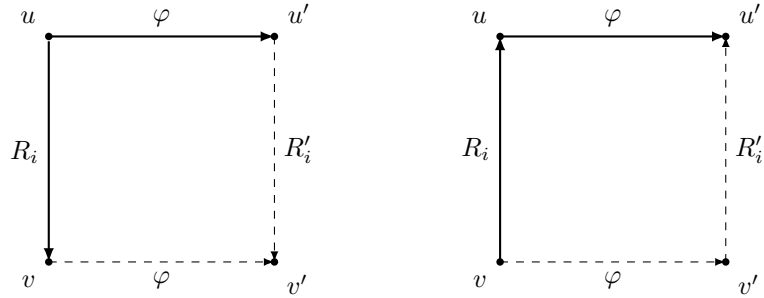
Now, we can define five types of bisimulations by combining notions of forward and backward simulations.

\begin{definition}\label{Definition - forward bisimulation}
Let $\mathfrak{M}=(W, \{R_{i}\}_{i\in I}, V)$ and $\mathfrak{M}'=(W', \{R_{i}'\}_{i\in I}, V')$ be two fuzzy Kripke models and let $\varphi\in \mathscr{R}(W, W')$ be a non-empty fuzzy relation. If both $\varphi$ and $\varphi^{-1}$ are forward simulations, i.e., if
\begin{align}
& V_p \leqslant V_p '\circ \varphi^{-1}\mbox{,}\quad V_p ' \leqslant V_p \circ \varphi\mbox{,}\qquad \mbox{ for every } p\in PV\mbox{,}\label{forward bisimulation -1}\tag{\textit{fb-1}}\\
& \varphi^{-1} \circ R_{i} \leqslant R_{i}' \circ \varphi^{-1}\mbox{,}\quad \varphi \circ R_i' \leqslant R_i \circ \varphi \mbox{,}\qquad \mbox{for every } i\in I\mbox{,}\label{forward bisimulation -2}\tag{\textit{fb-2}}\\
& \varphi^{-1}\circ V_p \leqslant V_p'\mbox{,} \quad \varphi \circ V_p' \leqslant V_p\mbox{,}\qquad \mbox{for every } p\in PV\mbox{.}\label{forward bisimulation -3}\tag{\textit{fb-3}}
\end{align}
then we call $\varphi$ a \textit{forward bisimulation} between $\mathfrak{M}$ and $\mathfrak{M}'$, and if $\varphi$ satisfies only \eqref{forward bisimulation -2} and \eqref{forward bisimulation -3}, then we call it a \textit{forward prebisimulation} between $\mathfrak{M}$ and $\mathfrak{M}'$.
\end{definition}
Now, we will define a backward (pre)bisimulation.
\begin{definition}
Let $\mathfrak{M}=(W, \{R_{i}\}_{i\in I}, V)$ and $\mathfrak{M}'=(W', \{R_{i}'\}_{i\in I}, V')$ be two fuzzy Kripke models and let $\varphi\in \mathscr{R}(W, W')$ be a non-empty fuzzy relation. If both $\varphi$ and $\varphi^{-1}$ are backward simulations, i.e. if
\begin{align}
& V_p \leqslant \varphi\circ V_p ' \mbox{,} \quad V_p ' \leqslant \varphi^{-1}\circ V_p \mbox{,}\qquad \mbox{for every } p\in PV\mbox{,}\label{backward bisimulation -1} \tag{\textit{bb-1}}\\
& R_{i} \circ \varphi \leqslant \varphi \circ R_{i}'\mbox{,}\quad R_{i}' \circ \varphi^{-1} \leqslant \varphi^{-1}\circ R_{i} \mbox{,}\qquad \mbox{for every } i\in I\mbox{,} \label{backward bisimulation -2} \tag{\textit{bb-2}}\\
& V_p \circ \varphi \leqslant V_p'\mbox{,}\quad V_p' \circ \varphi^{-1} \leqslant V_p\mbox{,}\qquad \mbox{for every } p\in PV\mbox{.} \label{backward bisimulation -3} \tag{\textit{bb-3}}
\end{align}
then we call $\varphi $ a \textit{backward bisimulation} between $\mathfrak{M}$ and $\mathfrak{M}'$, and if $\varphi$ satisfies only \eqref{backward bisimulation -2} and \eqref{backward bisimulation -3}, then we call it a \textit{backward prebisimulation} between $\mathfrak{M}$ and $\mathfrak{M}'$.
\end{definition}

We also define two ``mixed'' types of (pre)bisimulations. Namely, if $\varphi$ is a forward simulation and $\varphi^{-1}$ is a backward simulation, then we say that $\varphi$ is a \textit{forward-backward bisimulation} between $\mathfrak{M}$ and $\mathfrak{M}'$, and if only (\textit{fbb-2}) and (\textit{fbb-3}) hold, we say that $\varphi$ is a \textit{forward-backward prebisimulation} between $\mathfrak{M}$ and $\mathfrak{M}'$.

Similarly, if $\varphi$ is a backward simulation and $\varphi^{-1}$ is a forward simulation, then we say that $\varphi$ is a \textit{backward-forward bisimulation} between $\mathfrak{M}$ and $\mathfrak{M}'$, and if only (\textit{bfb-2}) and (\textit{bfb-2}) hold, then we say that $\varphi$ is a \textit{backward-forward prebisimulation} between $\mathfrak{M}$ and $\mathfrak{M}'$.

These two bisimulations arise from analogous definitions for fuzzy automata (cf.~\cite{Ciric2012}). They can be important in structures with no order, so only equality should be used in the definitions of bisimulations (cf.~\cite{Buchholz2008}). However, the question which fragment of the formulae these bisimulations preserve for the Kripke models remains open and needs to be further explored. 

\begin{definition}
Let $\mathfrak{M}=(W, \{R_{i}\}_{i\in I}, V)$ and $\mathfrak{M}'=(W', \{R_{i}'\}_{i\in I}, V')$ be two fuzzy Kripke models and let $\varphi\in \mathscr{R}(W, W')$ be a non-empty fuzzy relation. If $\varphi$ is both a forward and a backward bisimulation, i.e., if
\begin{align}
&V_p \leqslant V_p' \circ \varphi^{-1}\mbox{,}\quad V_p' \leqslant V_p\circ \varphi \mbox{,}\quad V_p\leqslant \varphi \circ V_p'\mbox{,}\quad V_p' \leqslant \varphi^{-1}\circ V_p \mbox{,}\quad \mbox{for every } p\in PV\mbox{,}\label{regular bisimulation -1} \tag{\textit{rb-1}}\\
& \varphi^{-1} \circ R_{i} = R_{i}' \circ \varphi^{-1}\mbox{,} \quad\varphi \circ R_{i}' = R_{i}\circ \varphi \mbox{,}\qquad \mbox{for every } i\in I\mbox{,}\label{regular bisimulation -2} \tag{\textit{rb-2}} \\
&\varphi^{-1} \circ V_p \leqslant V_p'\mbox{,}\quad \varphi \circ V_p ' \leqslant V_p\mbox{,}\quad V_p\circ\varphi \leqslant V_p' \mbox{,}\quad V_p'\circ \varphi^{-1}\leqslant V_p \mbox{,}\quad \mbox{for every } p\in PV\mbox{,} \label{regular bisimulation -3} \tag{\textit{rb-3}}
\end{align}
then we call $\varphi$ a \textit{regular bisimulation} between $\mathfrak{M}$ and $\mathfrak{M}'$, and if $\varphi$ satisfies only \eqref{regular bisimulation -2} and \eqref{regular bisimulation -3}, then we call it a \textit{regular prebisimulation} between $\mathfrak{M}$ and $\mathfrak{M}'$.
\end{definition}

For any $\theta\in \{fs, bs, fb, bb, fbb, bfb,rb\}$, a fuzzy relation which satisfies ($\theta$-1), ($\theta$-2) and ($\theta$-3) will be called a \textit{simulation/bisimulation of type $\theta$} or a \textit{$\theta$-simulation/bisimulation\/} between $\mathfrak{M}$ and $\mathfrak{M}'$, and a fuzzy relation satisfying ($\theta$-2) and ($\theta$-3) will be called a \textit{presimulation/prebisimulation of type $\theta$} or a \textit{$\theta$-presimulation/prebisimulation\/} between $\mathfrak{M}$ and $\mathfrak{M}'$. In addition, if $\mathfrak{M}$ and $\mathfrak{M}'$ are the same fuzzy Kripke model, then we use the name \textit{simulation/bisimulation of type $\theta$} or \textit{$\theta$-simulation/bisimulation\/} on the fuzzy Kripke model $\mathfrak{M}$. Also, by $\varphi^\theta_{*}$ we will denote a fuzzy relation satisfying ($\theta $-2) and ($\theta $-3).

It has been noted in \cite{Stankovic2021} that every forward simulation between two fuzzy Kripke models is a backward simulation between the reverse fuzzy Kripke models. Therefore, forward and backward simulations, forward and backward bisimulations, forward-backward and backward-forward bisimulations, are mutually dual concepts while regular bisimulations are self-dual.

\begin{lemma}\label{Lemma - equality of compositions}
Let $\mathfrak{M}=(W, \{R_{i}\}_{i\in I}, V)$ and $\mathfrak{M}'=(W', \{R_{i}'\}_{i\in I}, V')$ be two fuzzy Kripke models and let $\varphi\in \mathscr{R}(W, W')$ be $\theta$-(pre)simulation/(pre)bi\-sim\-u\-lation between $\mathfrak{M}$ and $\mathfrak{M}'$. Then, the following holds:
\begin{align}
&\varphi^{-1}\circ V_p =V_p \circ \varphi\mbox{,}\quad\mbox{for every } p\in PV\mbox{,} \label{equality of composition V_p and varphi-1}\\
&\varphi\circ V_p ' = V_p ' \circ \varphi^{-1}\mbox{,}\quad\mbox{for every } p\in PV\mbox{.}\label{equality of composition V_p and varphi-2}
\end{align}
\end{lemma}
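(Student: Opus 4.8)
The plan is to verify both equalities directly from the pointwise definitions of composition and of the inverse of a fuzzy relation. In fact the hypothesis that $\varphi$ is a $\theta$-(pre)simulation/(pre)bisimulation will not be used at all: both identities are instances of the general fact that $\rho^{-1}\circ g = g\circ\rho$ for every fuzzy relation $\rho\in\mathscr{R}(A,B)$ and every fuzzy subset $g\in\mathscr{F}(A)$ (which is the fuzzy-subset analogue of the identity $(\varphi\circ\psi)^{-1}=\psi^{-1}\circ\varphi^{-1}$ noted above). Presumably the statement is phrased for $\theta$-(pre)simulations/(pre)bisimulations only because it is exactly in that setting that it will be invoked later.

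To prove \eqref{equality of composition V_p and varphi-1}, I would observe that $\varphi^{-1}\circ V_p$ and $V_p\circ\varphi$ are both fuzzy subsets of $W'$, and evaluate each at an arbitrary $w'\in W'$. Using the definition of the composition of a fuzzy relation with a fuzzy subset together with $\varphi^{-1}(w',w)=\varphi(w,w')$, one gets $(\varphi^{-1}\circ V_p)(w') = \bigvee_{w\in W}\varphi^{-1}(w',w)\wedge V_p(w) = \bigvee_{w\in W}\varphi(w,w')\wedge V_p(w)$, whereas $(V_p\circ\varphi)(w') = \bigvee_{w\in W}V_p(w)\wedge\varphi(w,w')$. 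These two suprema agree because $\wedge$ is commutative, and since $w'$ was arbitrary the two fuzzy subsets coincide.

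For \eqref{equality of composition V_p and varphi-2}, I would simply apply the same argument (equivalently, the general identity quoted above) to the fuzzy relation $\varphi^{-1}\in\mathscr{R}(W',W)$ and the fuzzy subset $V_p'\in\mathscr{F}(W')$, using $(\varphi^{-1})^{-1}=\varphi$; this gives $\varphi\circ V_p' = V_p'\circ\varphi^{-1}$ with no further computation.

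There is no real obstacle here: the whole argument is bookkeeping with the pointwise definitions plus commutativity of meet. The only point that needs a little care is to keep track of which side each fuzzy relation is composed on and to apply the inverse on the correct side, so that the indices in the suprema line up.
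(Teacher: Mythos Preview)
Your argument is correct and is essentially identical to the paper's own proof: the paper also just expands $(\varphi^{-1}\circ V_p)(w')$ pointwise, uses $\varphi^{-1}(w',w)=\varphi(w,w')$ together with commutativity of $\wedge$, and omits the second case as analogous. Your additional remark that the $\theta$-(pre)simulation hypothesis is never actually used is accurate and worth keeping.
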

\begin{proof}
We will prove only the first case. Hence, we have
\begin{align*}
\varphi^{-1}\circ V_p (w') &= \bigvee_{w\in W} \varphi^{-1}(w', w)\wedge V_p (w) \\
&= \bigvee_{w\in W} V_p (w)\wedge \varphi(w, w')\\
&=V_p \circ \varphi (w')
\end{align*}
for every $w'\in W'$ and consequently, \eqref{equality of composition V_p and varphi-1} holds for any propositional variable $p\in PV$.
\end{proof}

Using the previous lemma, it follows that the definitions of all five types of bisimulations/prebisimulations differ only in the second conditions ($\theta$-2), for $\theta\in \{fb, bb, fbb, bfb,rb\}$. Now, conjunctions of conditions ($\theta$-1) and ($\theta$-3) in these definitions give us
\begin{equation}
V_p' = V_p \circ \varphi\mbox{,} \qquad V_p = \varphi\circ V_p ' \mbox{,}\qquad \mbox{ for every } p\in PV\mbox{.}\label{bisimulation 1-3}
\end{equation}

In a special case, when all accessibility relations and valuations are crisp, we can compare a forward bisimulation from Definition \ref{Definition - forward bisimulation} with the definition of bisimulation for a \textit{basic modal language} (i.e.~propositional modal logic with necessity and possibility operators). Our notation for the basic modal language from Section \ref{Section - Hennessy-Milner type theorems for Propositional Modal Logics} is PML$^{+}$ (PML stands for Propositional Modal Logic and should not be confused with Positive Modal Logic and Probabilistic Modal Logic). Let us recall the definition of bisimulation for PML$^{+}$ from \cite{Blackburn2001}(p.~64) with some minor notation changes for the sake of comparison.
\begin{definition}\label{Definition - bisimulation from Blackburn}
A non-empty binary relation $\varphi\subseteq W\times W'$ is called a \textit{bisimulation between} $\mathfrak{M}$ and $\mathfrak{M}'$ if the following conditions are satisfied:
\begin{itemize}
    \item [{\rm (i)}] If $(w,w')\in \varphi$ then $V(w, p)=V'(w',p)$ for every $p\in PV$.
    \item [{\rm (ii)}] If $(w,w')\in \varphi$ and $(w,v)\in R$, then there exists $v'$ (in $\mathfrak{M}'$) such that $(v,v')\in \varphi$ and $(w', v')\in R'$ (the \textit{forth condition}).
    \item [{\rm (iii)}] The converse of {\rm (ii)}: if $(w,w')\in \varphi$ and $(w', v')\in R'$, then there exists $v$ (in $\mathfrak{M}$) such that $(v,v')\in \varphi$ and $(w,v)\in R$ (the \textit{back condition}).
\end{itemize}
\end{definition}

Now, we can make a connection between our definitions of simulations and bisimulations. Firstly, let us split a base condition (i) in two parts:
\begin{itemize}
    \item [{\rm (ia)}] If $(w,w')\in \varphi$ then $V(w, p)\leqslant V'(w',p)$ for every $p\in PV$;
    \item [{\rm (ib)}] If $(w,w')\in \varphi$ then $V(w, p)\geqslant V'(w',p)$ for every $p\in PV$.
\end{itemize}
Then, from (ia), we have $\varphi^{-1}(w',w)\wedge V(w,p)\leqslant V'(w',p)$ for every $p\in PV$. Hence, for every $w\in W$, we have $\varphi^{-1}(w',w)\wedge V_p(w)\leqslant V_p ' (w')$, and consequently, we get $\varphi^{-1} \circ V_p (w')\leqslant V_p ' (w')$, which is our condition \eqref{forward simulation -3}.
Analogously, from (ib), we get $\varphi\circ V_p ' \leqslant V_p$ (the converse of \eqref{forward simulation -3}). Hence, we get the condition \eqref{forward bisimulation -3}.

Also, from (ia), we have $V(w,p)\leqslant V'(w',p) \wedge\varphi(w',w)^{-1}$, and consequently, we get $V_p\leqslant V_p ' \circ \varphi^{-1}$, which is our condition \eqref{forward simulation -1}. Analogously, from (ib), we get $V_p'\leqslant V_p \circ \varphi$ (the converse of \eqref{forward simulation -1}). Hence, we get the condition \eqref{forward bisimulation -1}. Therefore, the base condition (i) corresponds to the conditions \eqref{forward bisimulation -3} and \eqref{forward bisimulation -1}.

The condition (ii) can be rewritten as $\varphi^{-1}(w',w)\wedge R(w,v)$; then there exists $v'$ (in $\mathfrak{M}'$) such that $R'(w', v')\wedge \varphi^{-1} (v',v)$. Using a relational composition, we get $\varphi^{-1}\circ R\leqslant R' \circ \varphi^{-1}$ which is our condition \eqref{forward simulation -2}. Analogously, the condition (iii) can be written in the form $\varphi \circ R' \leqslant R\circ \varphi$. Therefore, conditions (ii) and (iii) correspond to the condition \eqref{forward bisimulation -2}.

In our work, we use duality and relax the conditions, which provides us with a plethora of different types of (pre)simulations and (pre)bisimulations. As we will see, a non-empty fuzzy relation does not have to satisfy both conditions ($\theta$-1) and ($\theta$-3), and that is why we also consider presimulations and prebisimulations. In addition, prebisimulations are important because they can give us a ``measure" of how modally equivalent some sets of formulae are.

The following example on the standard G\" odel modal logic over $[0,1]$ clarifies different types of (pre)simulations/(pre)bisimulations. From now on, for any $\theta \in \{fs, bs, fb, bb, fbb, bfb, rb\}$, by $\varphi^\theta $ we will denote the greatest simulation/bi\-sim\-u\-la\-tion of type $\theta$ between two given fuzzy Kripke models if it exists. On the other hand, by $\varphi^\theta_{*}$ we will denote the greatest fuzzy relation satisfying ($\theta $-2) and ($\theta$-3).

\begin{example}\label{Example - first example}
Let $\mathfrak{M}=(W, R, V)$ and $\mathfrak{M}'=(W', R', V')$ be two fuzzy Kripke models over the G\" odel structure, where $W=\{u,v,w\}$, $W'=\{u', v', w'\}$. Fuzzy relations $R$, $R'$ and fuzzy sets $V_p$ and $V_p '$ are represented by the following fuzzy matrices and column vectors:
\begin{align*}
&R=\begin{bmatrix}
   1 & 0.8 & 0.9 \\
   0.2 & 0.3 & 0.7 \\
   0.9 & 1 & 0.4
  \end{bmatrix}\mbox{,}\quad
  V_p=\begin{bmatrix}
    0.8 \\
    0.4\\
    0.2
  \end{bmatrix}\mbox{,}\\
&R'=\begin{bmatrix}
   0.9 & 0.8 & 1 \\
   0 & 0.3 & 0.7 \\
   1 & 0.8 & 0.4
  \end{bmatrix}\mbox{,}\quad
  V_p'=\begin{bmatrix}
    0.8 \\
    0.4\\
    0.2
  \end{bmatrix}\mbox{.}
\end{align*}

Then, using algorithms for testing the existence and computing the greatest (pre)sim\-u\-lation between fuzzy Kripke models $\mathfrak{M}$ and $\mathfrak{M}'$ from \cite{Stankovic2021}, we have:
\[
\varphi^{fs}=\begin{bmatrix}
   0.9 & 0.3 & 0.2\\
   1 & 1 & 0.2 \\
   0.9 & 0.3 & 1
  \end{bmatrix}\mbox{.}
\]
Let us verify the condition \eqref{forward bisimulation -3}: 
\[
{\varphi^{fs}}^{-1}\circ V_p=\begin{bmatrix}
   0.9 & 1 & 0.9\\
   0.3 & 1 & 0.3 \\
   0.2 & 0.2 & 1
  \end{bmatrix}\circ \begin{bmatrix}
    0.8 \\
    0.4  \\
    0.2
  \end{bmatrix} = \begin{bmatrix}
    0.8 \\
    0.4  \\
    0.2
  \end{bmatrix}=V_p '\mbox{.}
\]
Now, let us verify the condition \eqref{forward bisimulation -2}. First, we compute:
\[
{\varphi^{fs}}^{-1}\circ R = \begin{bmatrix}
   0.9 & 1 & 0.9\\
   0.3 & 1 & 0.3 \\
   0.2 & 0.2 & 1
  \end{bmatrix} \circ \begin{bmatrix}
   1 & 0.8 & 0.9 \\
   0.2 & 0.3 & 0.7 \\
   0.9 & 1 & 0.4
  \end{bmatrix} = \begin{bmatrix}
   0.9 & 0.9 & 0.9 \\
   0.3 & 0.3 & 0.7 \\
   0.9 & 1 & 0.4
  \end{bmatrix}\mbox{.}
\]
On the other hand,
\[R'\circ {\varphi^{fs}}^{-1}=
\begin{bmatrix}
   0.9 & 0.8 & 1 \\
   0 & 0.3 & 0.7 \\
   1 & 0.8 & 0.4
  \end{bmatrix}\circ \begin{bmatrix}
   0.9 & 1 & 0.9\\
   0.3 & 1 & 0.3 \\
   0.2 & 0.2 & 1
  \end{bmatrix} = 
  \begin{bmatrix}
   0.9 & 1 & 1\\
   0.3 & 0.3 & 0.7 \\
   0.9 & 1 & 0.9
  \end{bmatrix}\mbox{,}
\]
and therefore the condition \eqref{forward simulation -2} holds since ${\varphi^{fs}}^{-1}\circ R\leqslant R'\circ {\varphi^{fs}}^{-1}$. Checking condition \eqref{forward simulation -1} is formality, so we omit it.

Then, using algorithms for testing the existence and computing the greatest (pre)sim\-u\-lations and (pre)bisimulations between fuzzy Kripke models $\mathfrak{M}$ and $\mathfrak{M}'$ from \cite{Stankovic2021}, we have:
\begin{align*}
&\varphi^{bs}=\begin{bmatrix}
   0.9 & 0.4 & 0.2 \\
   1 & 0.8 & 0.2 \\
   1 & 0.8 & 1
  \end{bmatrix}\mbox{,} \quad \varphi^{fb}=\begin{bmatrix}
   0.8 & 0.3 & 0.2\\
   0.3 & 1 & 0.2 \\
   0.2 & 0.2 & 0.8
 \end{bmatrix}\mbox{,}\\
&\varphi^{bb}=\begin{bmatrix}
   0.9 & 0.4 & 0.2 \\
   0.4 & 0.8 & 0.2 \\
   0.2 & 0.2 & 0.9
  \end{bmatrix}\mbox{,} \quad \varphi^{fbb}=\begin{bmatrix}
   0.8 & 0.3 & 0.2 \\
   0.4 & 1 & 0.2 \\
   0.2 & 0.2 & 0.8
 \end{bmatrix}\mbox{,}\\
&\varphi^{bfb}=\begin{bmatrix}
   0.9 & 0.4 & 0.2 \\
   0.3 & 0.8 & 0.2 \\
   0.2 & 0.2 & 0.9
  \end{bmatrix}\mbox{,} \quad \varphi^{rb}=\begin{bmatrix}
   0.8 & 0.3 & 0.2 \\
   0.3 & 0.8 & 0.2 \\
   0.2 & 0.2 & 0.8
  \end{bmatrix}\mbox{.}
\end{align*}
In this particular example, all presimulations and prebisimulations $\varphi^\theta_{*}$ for $\theta \in \{fs, bs, fb, bb, fbb, bfb, rb\}$ satisfy the condition ($\theta$-1).
\end{example}

\subsection{Weak simulations and bisimulations}

The motivation for the introduction of weak simulations and bisimulations can be found in the theory of fuzzy automata (cf.~\cite{Jancic2014}). It has been shown that the existence of a weak simulation between two automata implies a language inclusion between them while the existence of a weak bisimulation implies language-equivalence.

Thus, we will define weak simulations and bisimulations to examine formulae inclusion and formulae-equivalence between two fuzzy Kripke models. To make the definitions of weak simulations and bisimulations as general as possible, we will define them on a set of some formulae (not necessarily on the set of all formulae). Also, the question arises as to the relationship between strong bisimulations and weak bisimulations for some fragments of logic defined in Section \ref{Section - Fuzzy Multimodal Logics}.

\begin{definition}\label{Definition - weak simulation}
Let $\mathfrak{M}=(W, \{R_{i}\}_{i\in I}, V)$ and $\mathfrak{M}'=(W', \{R_{i}'\}_{i\in I}, V')$ be two fuzzy Kripke models, let $\Psi \subseteq \PhiIH$ be a non-empty set of formulae and let $\varphi \in \mathscr{R}(W,W')$ be a non-empty fuzzy relation. We call $\varphi$ a \textit{weak forward simulation for the set $\Psi$\/} if it is a solution to the system of fuzzy relation inequalities:
\begin{align}
& V_p \leqslant V_p '\circ \varphi^{-1}\mbox{,}\qquad \mbox{for every } p\in PV\mbox{,}\label{weak simulation -1} \tag{\textit{ws-1}}\\
& \varphi^{-1}\circ V_A \leqslant V_A'\mbox{,}\qquad \mbox{for every } A\in \Psi \mbox{,} \label{weak simulation -2} \tag{\textit{ws-2}}
\end{align}
and a \textit{weak forward presimulation for the set $\Psi$} if it satisfies the condition \eqref{weak simulation -2}.
\end{definition}

We call $\varphi$ a \textit{weak backward simulation for the set $\Psi$} if it satisfies
\begin{align}
& V_p \leqslant \varphi\circ V_p ' \mbox{,}\qquad \mbox{for every } p\in PV\mbox{,}\\
&V_A\circ \varphi \leqslant V_A'\mbox{,}\qquad \mbox{for every } A\in \Psi\label{weak backward simulation-2}\mbox{,}
\end{align}
and a \textit{weak backward presimulation for the set $\Psi$} if it satisfies the condition \eqref{weak backward simulation-2}. According to Lemma \ref{Lemma - equality of compositions}, the concepts of \textit{weak forward (pre)simulation} and \textit{weak backward (pre)simulation for the set $\Psi$} mutually coincide, and we will simply call it a \textit{weak (pre)simulation}.

\begin{definition}
We call $\varphi$ a \textit{weak bisimulation for the set $\Psi$} if both $\varphi$ and $\varphi^{-1}$ are weak simulations for the set $\Psi$, i.e., if $\varphi$ satisfies
\begin{align}
& V_p \leqslant V_p '\circ \varphi^{-1}\mbox{,} \quad V_p ' \leqslant V_p \circ \varphi\mbox{,}\qquad\mbox{for every } p\in PV\mbox{,}\label{weak bisimulation-1}\tag{\textit{wb-1}}\\
& \varphi^{-1}\circ V_A \leqslant V_A'\mbox{,}\quad \varphi\circ V_A' \leqslant V_A\mbox{,}\qquad \mbox{for every } A\in \Psi\mbox{,}\label{weak bisimulation-2}\tag{\textit{wb-2}}
\end{align}
and $\varphi$ is called a \textit{weak prebisimulation for the set $\Psi$} if both $\varphi$ and $\varphi^{-1}$ are weak presimulations for the set $\Psi$, i.e., if $\varphi$ satisfies \eqref{weak bisimulation-2}.
\end{definition}

It is also possible to define four types of weak (pre)bi\-sim\-u\-lations, but they all mutually coincide.

As already mentioned, the meanings of weak simulations and bisimulations can best be explained in the case when $\mathfrak{M}$ and $\mathfrak{M}'$ are crisp Kripke models and $\varphi $ is an ordinary crisp (Boolean-valued) binary relation. The condition \eqref{weak simulation -1} is the same as \eqref{forward simulation -1}. The condition \eqref{weak simulation -2} is very similar to the condition \eqref{forward simulation -3}, but it does not refer only to the propositional variables but to all the formulae from the set $\Psi$. Hence, \eqref{weak simulation -2} means that if $w'$ simulates $w$ and the valuation $V$ assigns the value ``true'' to the formula $A\in \Psi$ in the world $w$, then the valuation $V'$ also assigns to this formula the value ``true'' in the world $w'$.

\begin{remark}\label{remark on which conditions strong and weak bisimulations coincide}
When $\Psi=PV$ then the condition \eqref{weak bisimulation-2} becomes
$$
\varphi^{-1}\circ V_p \leqslant V_p'\mbox{,} \quad \varphi\circ V_p' \leqslant V_p\mbox{,} \qquad \mbox{ for every } p\in PV\mbox{,}
$$
which is equivalent to the ($\theta$-3) condition for $\theta \in \{fb,bb,fbb,bfb,rb\}$ using \eqref{equality of composition V_p and varphi-1} and \eqref{equality of composition V_p and varphi-2}.

In this way, the condition ($\theta$-3) is packed in the condition \eqref{weak bisimulation-2} and with \eqref{weak bisimulation-1}, it can be said that the concepts of strong bisimulations and weak bisimulations coincide on conditions ($\theta$-1) and ($\theta$-3) for $\theta \in \{fb,\allowbreak bb,\allowbreak fbb,\allowbreak bfb,\allowbreak rb\}$ when $PV\subseteq \Psi$.
\end{remark}
Whether the definitions of weak (pre)simulations and (pre)bisimu\-la\-tions refer to the arbitrary set of formulae $\Psi$ or not, we usually want $\Psi$ to contain all propositional variables and we also usually take some fragments of $\PhiIH$ for the set $\Psi$.

\begin{remark}\label{Remark - equivalent form of wb-2}
Note that the condition \eqref{weak simulation -2} can be written down in an equivalent form:
\begin{equation}
\varphi(w,w')\leqslant\bigwedge_{A\in \Psi} V_A (w)\rightarrow V_A'(w')\mbox{,}\label{equivalent form of ws-2}
\end{equation}
for any $w\in W$ and $w'\in W'$. Hence, the greatest weak presimulation for the set $\Psi$ is
\begin{equation}\label{maximal weak presimulation}
\varphi_*^{ws}(w,w')=\bigwedge_{A\in \Psi} V_A (w)\rightarrow V_A'(w')\mbox{,}
\end{equation}
for any $w\in W$ and $w'\in W'$. Therefore, the greatest weak presimulation between two fuzzy Kripke models $\mathfrak{M}$ and $\mathfrak{M}'$ can be interpreted as a measure of degrees of formulae inclusion between two fuzzy Kripke models on the set $\Psi$.

In particular, if $\varphi_*^{ws}(w,w')=t$, the value $t$ can be interpreted as a measure of formulae inclusion between worlds $w$ and $w'$ on the set $\Psi$.

On the other hand, the condition \eqref{weak bisimulation-2} can be written down in an equivalent form:
\begin{equation}
\varphi(w,w')\leqslant\bigwedge_{A\in \Psi} V_A (w)\leftrightarrow V_A'(w')\mbox{,}\label{equivalent form of wb-2}
\end{equation}
for any $w\in W$ and $w'\in W'$. Hence, the greatest weak prebisimulation for the set $\Psi$ is
\begin{equation}\label{maximal weak prebisimulation}
\varphi_*^{wb}(w,w')=\bigwedge_{A\in \Psi} V_A (w)\leftrightarrow V_A'(w')\mbox{,}
\end{equation}
for any $w\in W$ and $w'\in W'$. Therefore, the greatest weak prebisimulation between two fuzzy Kripke models $\mathfrak{M}$ and $\mathfrak{M}'$ can be interpreted as a measure of degrees of formulae equality on the set $\Psi$, i.e., a measure of how much fuzzy Kripke models are $\Psi$-equivalent.

In particular, if $\varphi_*^{wb}(w,w')=t$, the value $t$ can be interpreted as a measure of formulae equality between worlds $w$ and $w'$ on the set $\Psi$.
\end{remark}

Therefore, we can conclude that a weak prebisimulation is a fuzzified version of formulae equivalence. It is generally known that a weak bisimulation on some structures is a fuzzy equivalence called \textit{weak bisimulation equivalence} and this concept is widely used in formal verification and model checking. Weak bisimulation equivalences provide better state reductions of the model than the ordinary strong bisimulations while at the same time they preserve the semantic properties of the model.

The set of weak (bi)simulations between two models is closed under an arbitrary union. Furthermore, the composition of two weak (bi)simulations is also weak (bi)simulation; a similar result applies to the weak ones, and to strong (bi)sim\-u\-la\-tions (cf.~\cite{Stankovic2021}). Therefore, we state the following lemma that can easily be proved.

\begin{lemma}
\begin{itemize}
\item[{\rm (a)}] If $\{\varphi_\alpha\}_{\alpha \in Y}$ are weak simulations/bisimulations between models $\mathfrak{M}$ and $\mathfrak{M}'$, then $\bigvee_{\alpha\in Y}\varphi_\alpha $ is also a weak simulation/bisi\-mu\-la\-tion between these models.
\item[{\rm (b)}] If $\varphi_1$ is a weak simulation/bisimulation between models $\mathfrak{M}$ and $\mathfrak{M}'$ and $\varphi_2$ is a weak simulation/bisimulation between models $\mathfrak{M}'$ and $\mathfrak{M}''$, then $\varphi_1\circ\varphi_2$ is a weak simulation/bisimulation between $\mathfrak{M}$ and $\mathfrak{M}''$.
\item[{\rm (c)}] The assertions {\rm (a)} and {\rm (b)} remain valid if the terms simulation and bisimulation are replaced by presimulation and prebisimulation, respectively.
\end{itemize}
\end{lemma}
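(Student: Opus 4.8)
The plan is to deduce all three parts from purely algebraic properties of the composition and inversion of fuzzy relations that are already available: associativity and monotonicity of $\circ$ (including the mixed compositions of a fuzzy set with a fuzzy relation), distribution of $\circ$ over arbitrary joins, and the identities $(\varphi\circ\psi)^{-1}=\psi^{-1}\circ\varphi^{-1}$ and $\bigl(\bigvee_\alpha\varphi_\alpha\bigr)^{-1}=\bigvee_\alpha\varphi_\alpha^{-1}$ recorded in the two lemmas preceding Definition~\ref{Definition - image, domain and degree finite relation}. Two observations reduce the work: a weak bisimulation for $\Psi$ is by definition a relation $\varphi$ for which \emph{both} $\varphi$ and $\varphi^{-1}$ are weak simulations for $\Psi$, so it is enough to prove the simulation case and then transport it through the two inversion identities; and a weak presimulation is obtained from a weak simulation by dropping condition \eqref{weak simulation -1}, so the computations that verify \eqref{weak simulation -2} give part~(c) with no change. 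Hence the substance is parts (a) and (b) for weak simulations.

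For part~(a), set $\varphi=\bigvee_{\alpha\in Y}\varphi_\alpha$ with each $\varphi_\alpha$ a weak simulation for $\Psi$ between $\mathfrak{M}$ and $\mathfrak{M}'$. Since $\varphi_\alpha^{-1}\leqslant\varphi^{-1}=\bigvee_\alpha\varphi_\alpha^{-1}$, monotonicity gives $V_p\leqslant V_p'\circ\varphi_\alpha^{-1}\leqslant V_p'\circ\varphi^{-1}$ for any fixed $\alpha$, which is \eqref{weak simulation -1}. For \eqref{weak simulation -2}, distributing composition over the join yields $\varphi^{-1}\circ V_A=\bigl(\bigvee_\alpha\varphi_\alpha^{-1}\bigr)\circ V_A=\bigvee_\alpha(\varphi_\alpha^{-1}\circ V_A)\leqslant V_A'$ for every $A\in\Psi$, because each summand is $\leqslant V_A'$ by hypothesis; alternatively one can argue pointwise via the bound in Remark~\ref{Remark - equivalent form of wb-2}, observing that a pointwise supremum of relations each lying below the fixed bound $\bigwedge_{A\in\Psi}V_A(w)\rightarrow V_A'(w')$ still lies below it. Thus $\varphi$ is a weak simulation. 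For the bisimulation statement, $\varphi^{-1}=\bigvee_\alpha\varphi_\alpha^{-1}$ is a join of weak simulations, hence a weak simulation by what was just shown, so $\varphi$ is a weak bisimulation.

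For part~(b), put $\varphi=\varphi_1\circ\varphi_2$, where $\varphi_1$ is a weak simulation between $\mathfrak{M}$ and $\mathfrak{M}'$ and $\varphi_2$ is a weak simulation between $\mathfrak{M}'$ and $\mathfrak{M}''$, so that $\varphi^{-1}=\varphi_2^{-1}\circ\varphi_1^{-1}$. Condition \eqref{weak simulation -1} follows by chaining the hypotheses through monotonicity and associativity: $V_p\leqslant V_p'\circ\varphi_1^{-1}\leqslant(V_p''\circ\varphi_2^{-1})\circ\varphi_1^{-1}=V_p''\circ\varphi^{-1}$. Condition \eqref{weak simulation -2} follows the same way: for $A\in\Psi$, $\varphi^{-1}\circ V_A=\varphi_2^{-1}\circ(\varphi_1^{-1}\circ V_A)\leqslant\varphi_2^{-1}\circ V_A'\leqslant V_A''$. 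For the bisimulation version, $\varphi^{-1}=\varphi_2^{-1}\circ\varphi_1^{-1}$ is the composition of the weak simulation $\varphi_2^{-1}$ between $\mathfrak{M}''$ and $\mathfrak{M}'$ with the weak simulation $\varphi_1^{-1}$ between $\mathfrak{M}'$ and $\mathfrak{M}$, hence a weak simulation between $\mathfrak{M}''$ and $\mathfrak{M}$ by the case just treated, so $\varphi$ is a weak bisimulation.

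I do not anticipate a genuine obstacle: every step instantiates a property already in hand. The only points needing a line of care are bookkeeping ones — tracking which pair of models each relation and its inverse connects when reducing the bisimulation claims to the simulation claims — and the standing requirement in Definition~\ref{Definition - weak simulation} that the relations in question be non-empty: this is immediate for $\bigvee_\alpha\varphi_\alpha$, and for $\varphi_1\circ\varphi_2$ one appeals to the same convention used in \cite{Stankovic2021} (or simply restricts attention to the case in which this composition is non-empty).
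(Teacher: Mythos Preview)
Your proposal is correct and follows exactly the routine algebraic verification the paper has in mind: the paper itself gives no detailed proof of this lemma, only remarking that it ``can easily be proved'' from the basic properties of composition and inversion of fuzzy relations (distributivity over joins, associativity, monotonicity, $(\varphi\circ\psi)^{-1}=\psi^{-1}\circ\varphi^{-1}$, $(\bigvee_\alpha\varphi_\alpha)^{-1}=\bigvee_\alpha\varphi_\alpha^{-1}$), which is precisely what you carry out. Your remark about the non-emptiness convention for $\varphi_1\circ\varphi_2$ is a fair caveat that the paper leaves implicit.
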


In \cite{Stankovic2021}, the duality between forward and backward simulations, forward and backward bisimulations, and backward-forward and forward-backward bisimulations was discussed with respect to the fuzzy Kripke model and the corresponding reverse fuzzy Kripke model. A similar duality can be defined for the sets of formulae for Kripke models.

\begin{definition}
Let a mapping $\Psi \mapsto \Psi^d$ from the set
\begin{equation}\label{equation - set of sets of formulae}
\{\PhiIHPF, \PhiIHplus, \PhiIHminus, \PhiIH\}
\end{equation}
into itself be defined as follows:
\[
\left(
\begin{matrix}
\PhiIHPF & \PhiIHplus & \PhiIHminus & \PhiIH\\
\PhiIHPF & \PhiIHminus & \PhiIHplus & \PhiIH
\end{matrix}
\right)\mbox{,}
\]
where $\PhiIHPF$ denotes the set of propositional formulae, i.e., formulae without any modal operators.
\end{definition}

\begin{example}
Let us note that the set \eqref{equation - set of sets of formulae} can contain arbitrary dual sets of formulae. That fact follows from the reversing duality of modal operators for every $i\in I$:
\[
\left(
\begin{matrix}
\square_i & \square_i^{-} & \lozenge_i & \lozenge_i^{-}\\
\square_i^{-} & \square_i & \lozenge_i^{-} & \lozenge_i
\end{matrix}
\right)\mbox{.}
\]
For example, let $\Psi$ contain set $\Phi=\{ A\wedge B, \lozenge_i A, \square_j ^{-}A \rightarrow \lozenge_k B\}$. Then, $\Psi$ also contains $\Phi^{d}=\{A\wedge B,\lozenge_i ^{-} A, \square_j A \rightarrow \lozenge_k ^{-} B\}$, for some $i,j,k\in I$, such that $\Psi(\Phi)=\Phi^{d}$.
\end{example}

Now we can state the following proposition:

\begin{proposition}\label{Proposition - Duality for sets of formulae}
Let $\mathfrak{M}=(W, \{R_{i}\}_{i\in I}, V)$ and $\mathfrak{M}'=(W', \{R_{i}'\}_{i\in I}, V')$ be two fuzzy Kripke models, let $\mathfrak{M}^{-1}$ and $\mathfrak{M'}^{-1}$ be the reverse fuzzy Kripke models for $\mathfrak{M}$ and $\mathfrak{M}'$, respectively, and let $\Psi \in \{\PhiIHPF, \PhiIHplus, \PhiIHminus, \PhiIH\}$.

Then the following is true:
\begin{itemize}
\item[{\rm (a)}] $\varphi$ is a weak simulation/bisimulation for set $\Psi$ between $\mathfrak{M}$ and $\mathfrak{M}'$ if and only if $\varphi$ is a weak simulation/bisimulation for the set $\Psi^d$ between the reverse fuzzy Kripke models $\mathfrak{M}^{-1}$ and $\mathfrak{M'}^{-1}$.
\item[{\rm (b)}] The assertion {\rm (a)} remains valid if the terms simulation and bisimulation are replaced by a presimulation and prebisimulation, respectively.
\end{itemize}
\end{proposition}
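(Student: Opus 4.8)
The plan is to reduce assertion (a) to the definition-level observation that the valuation of a formula in a world of $\mathfrak{M}$ equals the valuation of the dual formula in the same world of the reverse model $\mathfrak{M}^{-1}$, and similarly for $\mathfrak{M}'$ and $\mathfrak{M'}^{-1}$. First I would establish the key lemma: for every world $w\in W$ and every $A\in \PhiIH$, one has $V^{\mathfrak{M}}(w,A)=V^{\mathfrak{M}^{-1}}(w,A^d)$, where $A^d$ is obtained from $A$ by swapping $\square_i\leftrightarrow\squaremi$ and $\lozenge_i\leftrightarrow\lozengemi$ for every $i\in I$ (and leaving propositional variables, truth constants, $\wedge$ and $\rightarrow$ untouched). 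This is proved by induction on the structure of $A$: the base cases and the cases of $\wedge$, $\rightarrow$ are immediate because the evaluation rules (V1)–(V2) and the constant rule do not mention the accessibility relations, while the modal cases follow directly by comparing (V3) with (V5) and (V4) with (V6), using that the accessibility relation of $\mathfrak{M}^{-1}$ is $R_i^{-1}$, so that $R_i^{-1}(w,u)=R_i(u,w)$. A consequence of this lemma, read pointwise, is the identity $V_A^{\mathfrak{M}}=V_{A^d}^{\mathfrak{M}^{-1}}$ between the corresponding fuzzy subsets of $W$ (and the analogous identity on $W'$).

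Next I would observe that the map $A\mapsto A^d$ restricts to a bijection of each of the four sets $\PhiIHPF$, $\PhiIHplus$, $\PhiIHminus$, $\PhiIH$ onto the set $\Psi^d$ prescribed by the displayed table: propositional formulae have no modal operators so $A^d=A$; a plus-formula uses only $\square_i,\lozenge_i$ so $A^d$ uses only $\squaremi,\lozengemi$, i.e.\ $A^d$ is a minus-formula, and vice versa; and $\PhiIH$ is closed under the swap. Hence $\{\,V_A^{\mathfrak{M}} : A\in\Psi\,\}=\{\,V_B^{\mathfrak{M}^{-1}} : B\in\Psi^d\,\}$ as families of fuzzy subsets of $W$, and likewise on $W'$.

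With these two facts in hand, the proof of (a) is a direct unwinding of Definition~\ref{Definition - weak simulation} (and the weak-bisimulation definition). Condition \eqref{weak simulation -1}, $V_p\leqslant V_p'\circ\varphi^{-1}$ for $p\in PV$, is literally unchanged when passing to the reverse models, since $V_p^{\mathfrak{M}}=V_p^{\mathfrak{M}^{-1}}$ and $V_p^{\mathfrak{M}'}=V_p^{\mathfrak{M'}^{-1}}$ (propositional variables are fixed by $d$). Condition \eqref{weak simulation -2}, $\varphi^{-1}\circ V_A\leqslant V_A'$ for all $A\in\Psi$, becomes, via the substitution $A=B^d$ with $B$ ranging over $\Psi^d$ and the lemma, exactly $\varphi^{-1}\circ V_B^{\mathfrak{M}^{-1}}\leqslant V_B^{\mathfrak{M'}^{-1}}$ for all $B\in\Psi^d$, which is the weak-simulation condition for $\Psi^d$ between $\mathfrak{M}^{-1}$ and $\mathfrak{M'}^{-1}$. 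Since the whole correspondence is symmetric in $\varphi$ and $\varphi^{-1}$, the bisimulation case follows by applying the simulation case to both $\varphi$ and $\varphi^{-1}$; and the pre-variants in (b) are obtained by simply dropping the conditions \eqref{weak simulation -1}/\eqref{weak bisimulation-1} throughout, which does not affect any of the above reasoning. The only mildly delicate point — the ``main obstacle,'' such as it is — is getting the bookkeeping of the structural induction on $A^d$ exactly right and confirming that $d$ is an involution that genuinely permutes the four designated fragments; once the lemma $V_A^{\mathfrak{M}}=V_{A^d}^{\mathfrak{M}^{-1}}$ is in place, everything else is a mechanical substitution.
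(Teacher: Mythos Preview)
Your proposal is correct and takes essentially the same approach as the paper: the paper's own proof is a single sentence asserting that the claim follows directly from the definitions of formulae, the sets of formulae, and the reverse model, and your argument is precisely a careful unpacking of that sentence via the lemma $V^{\mathfrak{M}}_A=V^{\mathfrak{M}^{-1}}_{A^d}$ and the bijectivity of $A\mapsto A^d$ on the four fragments.
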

\begin{proof}
The proof is directly follows from the definition of formulae, the definitions of the sets of formulae and the reverse model.
\end{proof}

\section{Hennessy-Milner type theorems for fuzzy multimodal logics}\label{Section - Hennessy-Milner type theorems for fuzzy multimodal logics}

Let us briefly recall the essence of the original Hennessy-Milner theorem. The condition {\rm (i)} from Definition \ref{Definition - bisimulation from Blackburn} means that bisimilar propositional variables have the same properties (values). The conditions {\rm (ii)} and {\rm (iii)} ensure that the relations of the models are sufficiently similar to ensure the preservation of truth of the formulae.

Therefore, a bisimulation preserves the truth values of the formulae. Hence, for a basic modal language PML$^{+}$, bisimilar worlds are formulae equivalent with respect to the set of all formulae.

The converse of this assertion, meaning that if worlds are formulae equivalent, they must be bisimilar, generally does not hold, but it is valid for some classes of Kripke models. This is exactly what the Hennessy-Milner theorem specifies.

\begin{theorem}[Hennessy-Milner Theorem]
Let $\mathfrak{M}=(W, R, V)$ and $\mathfrak{M}'=(W', R', V')$ be two image-finite Kripke models over the basic modal language {\rm PML}$^{+}$. Then, for any $w\in W$ and $w'\in W'$, $w$ and $w'$ are bisimilar with respect to {\rm PML}$^{+}$ if and only if $w$ and $w'$ are {\rm PML}$^{+}$-equivalent.
\end{theorem}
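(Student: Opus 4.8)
The plan is to prove both directions separately, with the easy direction being that bisimilarity implies \textrm{PML}$^{+}$-equivalence and the hard direction being the converse. For the forward direction, I would fix a bisimulation $\varphi$ with $(w,w')\in\varphi$ and show by induction on the structure of a formula $A\in\textrm{PML}^{+}$ that $V(w,A)=V'(w',A)$. The base case of propositional variables is exactly condition {\rm (i)}; the propositional connectives are immediate from (V1) and (V2) since the valuations are evaluated pointwise. The modal cases use the forth condition {\rm (ii)} and the back condition {\rm (iii)}: to see $V(w,\lozenge A)=V'(w',\lozenge A)$, I would note that if $V(w,\lozenge A)=1$ there is some $v$ with $(w,v)\in R$ and $V(v,A)=1$, so by {\rm (ii)} there is $v'$ with $(w',v')\in R'$ and $(v,v')\in\varphi$, hence $V'(v',A)=1$ by the induction hypothesis; the reverse inequality uses {\rm (iii)}; and the $\square$ case is dual (or follows from $\square$ being definable via $\neg\lozenge\neg$ in the classical setting). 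Note that image-finiteness is not needed for this direction.

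For the converse, the strategy is to take $\varphi=\{(w,w')\mid w\text{ and }w'\text{ are }\textrm{PML}^{+}\text{-equivalent}\}$ and verify that this relation is a bisimulation. Condition {\rm (i)} holds by definition of \textrm{PML}$^{+}$-equivalence applied to propositional variables. The work is in checking the forth condition {\rm (ii)} (the back condition being symmetric): suppose $(w,w')\in\varphi$ and $(w,v)\in R$; I must produce $v'$ with $(w',v')\in R'$ and $(v,v')\in\varphi$. Here is where image-finiteness enters. Let $S'=\{v'\mid (w',v')\in R'\}$, which is finite. If no element of $S'$ were \textrm{PML}$^{+}$-equivalent to $v$, then for each $v'\in S'$ there would be a formula $A_{v'}$ with, say, $V(v,A_{v'})=1$ but $V'(v',A_{v'})=0$ (after possibly replacing $A_{v'}$ by its negation). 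Taking the finite conjunction $A=\bigwedge_{v'\in S'}A_{v'}$, we get $V(v,A)=1$, hence $V(w,\lozenge A)=1$, whereas $V'(v',A)=0$ for every $v'\in S'$, so $V'(w',\lozenge A)=0$. This contradicts $(w,w')\in\varphi$ since $\lozenge A\in\textrm{PML}^{+}$. Hence some $v'\in S'$ is \textrm{PML}$^{+}$-equivalent to $v$, i.e. $(v,v')\in\varphi$, as required.

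The main obstacle is the converse direction, specifically the finite-conjunction argument inside the forth condition: one must be careful that the set of distinguishing formulae is indexed by a \emph{finite} set (guaranteed precisely by image-finiteness) so that their conjunction is again a well-formed formula of the language, and one must handle the direction of the distinguishing inequality correctly — in the classical two-valued setting a single formula $A_{v'}$ can be chosen so that $v$ satisfies it and $v'$ does not, which is where the two-valuedness is genuinely used. Without image-finiteness this step breaks down, and indeed the theorem is known to fail for arbitrary Kripke models. Everything else is a routine structural induction and a symmetry observation between the forth and back conditions.
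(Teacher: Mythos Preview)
Your proof is correct and is the standard textbook argument (essentially Theorem~2.24 of Blackburn--de~Rijke--Venema, which the paper cites). Note, however, that the paper does not give its own proof of this classical statement: it is quoted only as background motivation preceding the fuzzy generalizations. If one insists on extracting a proof from the paper's framework, the classical result is recovered as the Boolean special case of Theorem~\ref{Hennessy-Milner Theorem for plus-formulae}, whose two halves are Lemma~\ref{lemma - first lemma} and Lemma~\ref{lemma - second lemma}. The proof of Lemma~\ref{lemma - second lemma} uses exactly your finite-conjunction-of-distinguishing-formulae idea---assume the forth condition fails, build a formula $\lozenge B$ from witnesses indexed by the finite successor set, and derive a contradiction---but carried out as a fuzzy-relational inequality argument with an additional case analysis on the linear order of truth values (the formula $B_{v'}=A_{v'}\leftrightarrow\overline{z_{v'}}$ there plays the role of your ``possibly negate $A_{v'}$'' step). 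In the two-valued setting that case analysis collapses to precisely what you wrote, so your direct route and the paper's general route agree in substance; yours is simply the specialization, which is shorter and more transparent for the classical case.

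One minor point worth making explicit: when the successor set $S'=\{v'\mid (w',v')\in R'\}$ is empty, the conjunction $\bigwedge_{v'\in S'}A_{v'}$ is vacuously $\overline{1}$, and then $\lozenge\,\overline{1}$ already distinguishes $w$ from $w'$. Your argument covers this implicitly, and the paper's Lemma~\ref{lemma - second lemma} likewise handles the analogous boundary case by setting $B=\overline{1}$.
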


In other words, the Hennessy-Milner Theorem implies that the two worlds $w$ and $w'$ are bisimilar with respect to PML$^{+}$ if and only if the sets of PML$^{+}$-formulae valid in $w$ and $w'$ coincide. In the context of fuzzy multimodal logics, we can make the following generalization:

Let $\mathfrak{M}=(W, \{R_i\}_{i\in I}, V)$ be a fuzzy Kripke model and let $\Psi \subseteq \PhiIH$ be some set of formulae. For each $w\in W$ we define a fuzzy subset $V_w$ of $\Psi $ by $V_w(A)=V(w,A)$, for every $A\in \Psi $. This means that the degree to which a formula $A$ belongs to the fuzzy set $V_w$ is equal to the truth degree of $A$ in the world $w$. In classical modal logic, $V_w$ is simply the set of all formulae valid in the world $w$, so in the context of fuzzy modal logic we will say that $V_w$ is the fuzzy set of formulae that are true (with a certain degree of truth) in $w$.

Now, let $\mathfrak{M}=(W, \{R_{i}\}_{i\in I}, V)$ and $\mathfrak{M}'=(W', \{R_{i}'\}_{i\in I}, V')$ be two fuzzy Kripke models and let $\Psi \subseteq \PhiIH$ be some set of formulae. As we noted in the previous section, the greatest weak (pre)bisimulation for the set $\Psi$ (when it exists) is given by
\[
\varphi(w,w')=\bigwedge_{A\in\Psi} V_A(w)\leftrightarrow V'_A(w')=
\bigwedge_{A\in\Psi} V_w(A)\leftrightarrow V'_{w'}(A).
\]
In the fuzzy set theory, the expression far right in this equation is known as the degree of equality of fuzzy sets $V_w$ and $V'_{w'}$, and therefore, the greatest weak prebisimulation is the measure of the degree of equality of fuzzy sets of formulae from $\Psi $ valid in two worlds $w$ and $w'$, that is, the measure of the degree of modal equivalence between worlds $w$ and $w'$ with respect to formulae from $\Psi $.

Note that the Hennessy-Milner theorem replaces weak bisimulations by bisimulations, which is important because the greatest bisimulations between finite models can be computed by algorithms of polynomial complexity; in contrast to the greatest weak bisimulations, which are generally computed by algorithms of exponential complexity. An even bigger problem arises when computing the greatest fuzzy weak bisimulations.

Our aim is to prove three Hennessy-Milner type theorems for fuzzy multimodal logics over linearly ordered Heyting algebras. We will show that the degree of modal equivalence with respect to plus-formulae, between two worlds in image-finite Kripke models, can be expressed by the greatest forward (pre)bi\-sim\-u\-la\-tion, the degree of modal equivalence with respect to mi\-nus-for\-mulae, between two worlds in domain-finite Kripke models, can be expres\-sed by the greatest backward (pre)bi\-sim\-u\-la\-tion, and the degree of modal equivalence with respect to all formulae, between two worlds in degree-finite Kripke models, can be expressed by the greatest regular (pre)bi\-sim\-u\-la\-tion.

First we prove the following theorem.

\begin{theorem}[The Hennessy-Milner type theorem for plus-for\-mulae]\label{Hennessy-Milner Theorem for plus-formulae}
Let $\mathfrak{M}=(W, \{R_{i}\}_{i\in I}, V)$ and $\mathfrak{M}'=(W', \{R_{i}'\}_{i\in I}, V')$ be two image-finite fuzzy Kripke models over a linearly ordered Heyting algebra $\mathscr H$. The greatest weak $\PhiIHplus$-pre\-bisimulation (resp.~the greatest $\PhiIHplus$-bisimulation) between $\mathfrak{M}$ and $\mathfrak{M}'$, if it exists, is the greatest forward prebisim\-ulation (resp.~the greatest forward bisimulation) between $\mathfrak{M}$ and $\mathfrak{M}'$.
\end{theorem}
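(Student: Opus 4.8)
The plan is to prove the two inclusions in both directions. One direction is essentially immediate from what has already been set up: any forward (pre)bisimulation $\varphi$ satisfies $\varphi^{-1}\circ V_p\leqslant V_p'$ and its dual, and an easy induction on the structure of plus-formulae — using the semantic clauses (V1)–(V4), the forward condition \eqref{forward bisimulation -2}, and Lemma \ref{Lemma - equality of compositions} to handle the propositional-variable base case — shows that $\varphi^{-1}\circ V_A\leqslant V_A'$ and $\varphi\circ V_A'\leqslant V_A$ for every $A\in\PhiIHplus$. Hence every forward (pre)bisimulation is a weak $\PhiIHplus$-(pre)bisimulation, so the greatest forward (pre)bisimulation is below the greatest weak $\PhiIHplus$-(pre)bisimulation. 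The content of the theorem is the reverse inclusion: the greatest weak $\PhiIHplus$-prebisimulation $\varphi:=\varphi_*^{wb}$, given explicitly by $\varphi(w,w')=\bigwedge_{A\in\PhiIHplus}V_A(w)\leftrightarrow V_A'(w')$, must itself satisfy the forward conditions, in particular the hard one \eqref{forward bisimulation -2}, namely $\varphi^{-1}\circ R_i\leqslant R_i'\circ\varphi^{-1}$ and $\varphi\circ R_i'\leqslant R_i\circ\varphi$ for each $i\in I$.

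For that reverse inclusion I would argue by contradiction in the style of the classical Hennessy–Milner proof, but adapted to the fuzzy, linearly ordered setting. Suppose \eqref{forward bisimulation -2} fails, say $(\varphi^{-1}\circ R_i)(w',v) > (R_i'\circ\varphi^{-1})(w',v)$ for some $i$, $w'\in W'$, $v\in W$; unwinding the compositions there is a world $w\in W$ with $\varphi(w,w')\wedge R_i(w,v)$ strictly above $\bigvee_{v'\in W'}R_i'(w',v')\wedge\varphi(v,v')$. Let $t$ be this supremum (a value strictly below $\varphi(w,w')\wedge R_i(w,v)$). Since $\mathfrak{M}'$ is image-finite, the join over $v'$ is finite, so for each $v'$ with $R_i'(w',v')$ large enough we have $\varphi(v,v')\leqslant t$ — i.e. by definition of $\varphi$ there is a plus-formula $A_{v'}\in\PhiIHplus$ witnessing $V_{A_{v'}}(v)\leftrightarrow V_{A_{v'}}'(v')$ is small (at most $t$, or strictly below the target value). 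Here linearity of $\mathscr H$ is crucial: it lets me compare these finitely many witnessing values and combine the finitely many formulae $A_{v'}$ into a single plus-formula — using $\wedge$, the truth constants $\overline t$, and implication/possibility operators — whose $\lozenge_i$-modality then separates $w$ from $w'$, contradicting $\varphi(w,w')>t$. The dual inequality $\varphi\circ R_i'\leqslant R_i\circ\varphi$ is handled symmetrically (or by the $\varphi^{-1}$ half of the weak-bisimulation condition). One also has to check \eqref{forward bisimulation -1} and \eqref{forward bisimulation -3} for $\varphi$, but \eqref{forward bisimulation -3} is exactly the $\Psi=PV\subseteq\PhiIHplus$ instance of \eqref{weak bisimulation-2} (Remark \ref{remark on which conditions strong and weak bisimulations coincide}) together with Lemma \ref{Lemma - equality of compositions}, and \eqref{forward bisimulation -1} follows likewise, so these are routine.

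The main obstacle, as in every Hennessy–Milner argument, is the "combining the separating formulae" step: from the finitely many plus-formulae $A_{v'}$ (one for each relevant successor $v'$ of $w'$) I must build one plus-formula $A$ such that $V_A(v)\leftrightarrow V_A'(v')$ is uniformly small for all these $v'$ simultaneously, and such that applying $\lozenge_i$ produces the desired strict gap $V_{\lozenge_iA}(w)\not\leftrightarrow V_{\lozenge_iA}'(w')$. In the Heyting/Gödel setting one cannot simply take a conjunction of "$A_{v'}$ is true" statements as in the Boolean case; instead one calibrates each $A_{v'}$ against an appropriate truth constant $\overline{t}$ (available in the language by Definition \ref{Definition - Language over H}) to turn "the biimplication value is $\leqslant t$" into a genuine strict separation after applying the modality, and here the linear order on $H$ is what makes the finitely many calibrations mutually consistent. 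I expect the technical heart of the proof to be this calibration lemma, with everything else — the two easy inclusions, the base case, and verification of \eqref{forward bisimulation -1} and \eqref{forward bisimulation -3} — being bookkeeping built on Lemma \ref{Lemma - equality of compositions} and the semantic clauses (V1)–(V4).
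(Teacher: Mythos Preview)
Your overall strategy matches the paper's: the proof is split into the two directions you describe, with the forward-to-weak direction handled by induction on formulae (the paper's Lemma~\ref{lemma - first lemma}) and the weak-to-forward direction handled by contradiction, building a separating $\lozenge_i$-formula from finitely many calibrated witnesses $A_{v'}\leftrightarrow\overline{z_{v'}}$ (the paper's Lemma~\ref{lemma - second lemma}). Your description of the ``calibration'' step---biimplicating each witnessing formula against the truth constant $\overline{V(v,A_{v'})}$ so that the conjunction has value $1$ at $v$ but small value at each relevant $v'$, then applying $\lozenge_i$---is exactly what the paper does.

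One caution: you call the forward-to-weak direction ``essentially immediate'' and ``an easy induction,'' but in the paper this is actually where most of the work lies. The inductive step for $A=\square_i B$ (case~d of Lemma~\ref{lemma - first lemma}) is not routine: the semantics of $\square_i$ involves an infimum of residua, and you cannot push $\varphi^{-1}\circ(-)$ through that directly. The paper's argument for this case already uses both the \emph{linearity} of $\mathscr{H}$ (to do a case split on whether $\varphi^{-1}(u',u)\leqslant V_A(u)\wedge V_A'(u')$) and the \emph{image-finiteness} of $R_i'$ (to pick, for each $v$, a single $v'$ realizing the supremum in $R_i'\circ\varphi^{-1}$). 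So the hypotheses of the theorem are needed in both directions, not only in the Hennessy--Milner half; your sketch should not treat Lemma~\ref{lemma - first lemma} as bookkeeping.
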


The proof is based on the next two lemmas.

\begin{lemma}\label{lemma - first lemma}
Under the assumptions of Theorem \ref{Hennessy-Milner Theorem for plus-formulae}, any forward prebisimulation (resp.~forward bisimulation) between $\mathfrak{M}$ and $\mathfrak{M}'$ is a weak $\PhiIHplus$-prebisimulation (resp.~$\PhiIHplus$-bisimulation) between $\mathfrak{M}$ and $\mathfrak{M}'$.
\end{lemma}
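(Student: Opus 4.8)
The statement to prove is that any forward prebisimulation $\varphi$ between $\mathfrak{M}$ and $\mathfrak{M}'$ satisfies the weak prebisimulation conditions for the set $\PhiIHplus$; that is, $\varphi^{-1}\circ V_A \leqslant V_A'$ and $\varphi\circ V_A' \leqslant V_A$ for every plus-formula $A$ (and, in the bisimulation case, additionally the base condition \eqref{weak bisimulation-1}, which however is already subsumed once we verify the stated inequalities for $A=p\in PV$ together with Lemma \ref{Lemma - equality of compositions}). So the whole content is to show, by structural induction on $A\in \PhiIHplus$, that
\[
\varphi^{-1}\circ V_A \leqslant V_A' \qquad\text{and}\qquad \varphi\circ V_A' \leqslant V_A .
\]
By duality (symmetry of the forward-bisimulation conditions under $\varphi\leftrightarrow\varphi^{-1}$, $\mathfrak{M}\leftrightarrow\mathfrak{M}'$) it suffices to prove the first inequality for all such $A$; the second follows by applying the first to $\varphi^{-1}$.

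The base case $A=\overline{t}$ is trivial since $V_{\overline t}$ and $V'_{\overline t}$ are the constant $t$ and $\varphi$ is nonempty but has values $\leqslant 1$, so $\varphi^{-1}\circ V_{\overline t} = \bigvee_{w}\varphi^{-1}(w',w)\wedge t \leqslant t = V'_{\overline t}(w')$. The base case $A=p$ is exactly \eqref{forward simulation -3}, i.e.\ the hypothesis \eqref{forward bisimulation -3}. For the inductive step I would handle each connective: for $A=B\wedge C$, use that $\circ$ distributes appropriately and $V_{B\wedge C} = V_B\wedge V_C$, combined with the fact that over a \emph{linearly ordered} Heyting algebra one has $a\wedge\bigvee_j b_j = \bigvee_j(a\wedge b_j)$ and, crucially, $\bigvee_w (x_w\wedge y_w) \leqslant (\bigvee_w x_w)\wedge(\bigvee_w y_w)$ in general and, using linearity, one can push the meet through — this is where linearity of $\mathscr H$ first earns its keep. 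For $A=B\to C$, use the induction hypothesis together with the adjunction \eqref{adjunction property in Heyting algebra} and the identity \eqref{equality of composition V_p and varphi-1}-style manipulation; the residuum case is the reason the modal-equivalence connective needs linearity. For the modal cases $A=\lozenge_i B$ and $A=\square_i B$, combine the forward condition \eqref{forward bisimulation -2} ($\varphi^{-1}\circ R_i \leqslant R_i'\circ\varphi^{-1}$ and $\varphi\circ R_i'\leqslant R_i\circ\varphi$) with the induction hypothesis, chasing the compositions: e.g.\ $\varphi^{-1}\circ V_{\lozenge_i B} = \varphi^{-1}\circ R_i\circ V_B \leqslant R_i'\circ\varphi^{-1}\circ V_B \leqslant R_i'\circ V_B' = V'_{\lozenge_i B}$. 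The $\square_i$ case is dual, using the residuated form $V_{\square_i B} = R_i\to V_B$ and the adjoint inequality.

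\textbf{Main obstacle.} The routine part is the modal cases; the genuinely delicate part is the $\to$ (equivalently the $\leftrightarrow$) case, since composition $\varphi^{-1}\circ(-)$ does not interact well with $\to$ in a general Heyting algebra. Here I expect to need the hypothesis that $\mathscr H$ is \emph{linearly ordered}, which lets me replace the problematic infinite join/meet interchanges by case distinctions on which of finitely many truth values is the smaller. Concretely, to bound $\varphi^{-1}\circ V_{B\to C} = \bigvee_w \varphi^{-1}(w',w)\wedge(V_B(w)\to V_C(w))$ from above by $V'_{B\to C}(w') = V'_B(w')\to V'_C(w')$, I would use the adjunction to reduce to showing $V'_B(w')\wedge\bigl(\varphi^{-1}(w',w)\wedge(V_B(w)\to V_C(w))\bigr)\leqslant V'_C(w')$ for each $w$, then split on whether $V'_B(w')\leqslant$ or $\geqslant$ some relevant value using linearity, invoking the inductive inequalities for $B$ in both directions (which is why both inequalities must be carried through the induction simultaneously) and the already-established $\varphi\circ V_B'\leqslant V_B$. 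The image-finiteness hypothesis plays no role in \emph{this} lemma — it is the \emph{other} direction (the next lemma) that will need it — so I would not invoke it here; I would flag that the statement of Theorem \ref{Hennessy-Milner Theorem for plus-formulae} carries the image-finiteness assumption only because the converse inclusion does.
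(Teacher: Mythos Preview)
Your overall plan is correct, and in fact it is tighter than the paper's own argument. The paper proves the equivalent biconditional form $\varphi(u,u')\leqslant V_A(u)\leftrightarrow V_A'(u')$ by induction; for $A=B\to C$ it simply invokes the Heyting identity $(x_1\leftrightarrow y_1)\wedge(x_2\leftrightarrow y_2)\leqslant(x_1\to x_2)\leftrightarrow(y_1\to y_2)$, while for $A=\square_i B$ it carries out a rather elaborate case analysis that explicitly uses both the linear order on $\mathscr H$ \emph{and} image-finiteness of $R_i'$ (to pick a witness $v'$ realising a finite supremum). Your route, carrying the two inequalities $\varphi^{-1}\circ V_A\leqslant V_A'$ and $\varphi\circ V_A'\leqslant V_A$ through the induction and treating $\square_i$ via adjunction (use $\varphi\circ R_i'\leqslant R_i\circ\varphi$, then $V_{\square_i B}(u)\wedge R_i(u,v)\leqslant V_B(v)$, then the hypothesis for $B$), handles $\square_i$ without either assumption. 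So your remark that image-finiteness is irrelevant for \emph{this} lemma is right, and in fact stronger than what the paper establishes.

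Where you go slightly astray is in locating the difficulty. The $\to$ case is \emph{not} the delicate one and does not need linearity: by adjunction it reduces to showing $V_B'(u')\wedge\varphi(u,u')\wedge(V_B(u)\to V_C(u))\leqslant V_C'(u')$, and since the induction hypothesis for $B$ in the reverse direction gives $V_B'(u')\wedge\varphi(u,u')\leqslant V_B(u)$, the left side is bounded by $\varphi(u,u')\wedge V_C(u)\leqslant V_C'(u')$ via the hypothesis for $C$. No case split is required. (This is also why your parenthetical ``both inequalities must be carried through the induction simultaneously'' is exactly right, and why the earlier ``by duality it suffices to prove the first'' should be dropped: the $\to$ and $\square_i$ steps for one inequality consume the \emph{other} inequality on the subformula.) Finally, your remark about \eqref{weak bisimulation-1} is slightly off: it is literally identical to \eqref{forward bisimulation -1}, so for bisimulations there is nothing to add beyond the prebisimulation argument.
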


\begin{proof}
Let $\varphi $ be a forward prebisimulation between $\mathfrak{M}$ and $\mathfrak{M}'$. To prove that $\varphi $ is a weak $\PhiIHplus$-prebisimulation we will prove that
\begin{equation}\label{eq:to.be.proved}
\varphi(u,u')\leqslant V_{A}(u) \leftrightarrow V_{A}'(u'),\ \
\end{equation}
for all $u\in W$, $u'\in W'$ and every $A\in \PhiIHplus$. This will be proved by induction on the complexity of a formula $A$.

\textit{Induction basis:} If $A=p\in PV$, then from the fact that $\varphi$ is forward bisimulation we have that $\varphi^{-1}\circ V_p \leqslant V_{p}'$ and $\varphi\circ V_{p}' \leqslant V_p$, which means that
\[
\varphi^{-1}(u',u)\wedge V_p (u)\leqslant V_{p}'(u'), \quad \varphi(u,u')\wedge V_p' (u')\leqslant V_{p}(u),
\]
for all $u\in W$, $u'\in W'$ and $p\in PV$. Using the adjunction property (\ref{adjunction property in Heyting algebra}) we get
\[
\varphi(u,u') \leqslant V_p (u) \rightarrow V_{p}'(u'), \quad \varphi(u,u') \leqslant V_{p}'(u') \rightarrow V_p (u),
\]
and therefore,
\[
\varphi(u,u')\leqslant V_p (u) \leftrightarrow V_{p}'(u'),
\]
for all $u\in W$, $u'\in W'$ and $p\in PV$. Consequently, (\ref{eq:to.be.proved}) holds for any propositional variable $p$. It trivially holds for any truth constant $\overline{t}$.

\textit{Induction step:} a) Let $A=B\wedge C$ and let (\ref{eq:to.be.proved}) hold for $B$ and $C$, i.e.,
\[
\varphi(u,u')\leqslant V_B(u) \leftrightarrow V_B'(u'), \quad \varphi(u,u')\leqslant V_C(u) \leftrightarrow V_C'(u'),
\]
for all $u\in W$, $u'\in W'$. This yields
\[
\varphi(u,u')\leqslant (V_B(u) \leftrightarrow V_B'(u'))\wedge (V_C(u) \leftrightarrow V_C'(u'))\mbox{.}
\]
Using the property of Heyting algebras $(x_1 \leftrightarrow y_1) \wedge (x_2 \leftrightarrow y_2) \leqslant (x_1 \wedge x_2) \leftrightarrow (y_1 \wedge y_2)$, we get
\begin{align*}
\varphi(u,u')&\leqslant (V_B(u) \leftrightarrow V_B'(u'))\wedge (V_C(u) \leftrightarrow V_C'(u'))\\
 &\leqslant (V_B(u) \wedge V_C(u))\leftrightarrow (V_B'(u') \wedge V_C'(u'))\\
 &= V_{B\wedge C}(u) \leftrightarrow V_{B\wedge C}'(u') \\
 &= V_{A}(u)\leftrightarrow V_{A}'(u'),
\end{align*}
for all $u\in W$ and $u'\in W'$, so we conclude that (\ref{eq:to.be.proved}) holds for $A=B\land C$.

b) Let $A$ be of the form $B \rightarrow C$ and let (\ref{eq:to.be.proved}) hold for $B$ and $C$. In a similar way as in a), using the property of Heyting algebras
\[
(x_1 \leftrightarrow y_1) \wedge (x_2 \leftrightarrow y_2) \leqslant (x_1 \rightarrow x_2) \leftrightarrow (y_1 \rightarrow y_2),
\]
we prove that (\ref{eq:to.be.proved}) also holds for $A$.

c) Let $A=\lozenge_i B$ and let (\ref{eq:to.be.proved}) hold for $B$, i.e.,
\begin{align*}
\varphi(u,u')&\leqslant V_B (u) \leftrightarrow V_B'(u')\\
 &=(V_B (u) \rightarrow V_B'(u'))\land ( V_B'(u')\rightarrow V_B (u))\mbox{,}
\end{align*}
for all $u\in W$ and $u'\in W'$. Then it follows that
\[
\varphi(u,u')\leqslant (V_B (u) \rightarrow V_B'(u')), \quad
\varphi(u,u')\leqslant (V_B'(u')\rightarrow V_B (u)),
\]
and using the adjunction property (\ref{adjunction property in Heyting algebra}) we conclude
\[
\varphi^{-1}(u',u)\wedge V_B (u)\leqslant V_{B}'(u'), \quad \varphi(u,u')\wedge V_B' (u')\leqslant V_{B}(u)
\]
for all $u\in W$ and $u'\in W'$. Hence,
\[
\varphi^{-1}\circ V_B \leqslant V_{B}', \quad \varphi\circ V_{B}' \leqslant V_B,
\]
and we have
\begin{align*}
\varphi^{-1} \circ V_{A} &= \varphi^{-1} \circ R_i \circ V_{B} \leqslant R_i ' \circ \varphi^{-1} \circ V_{B}\qquad \mbox{(by (\ref{forward bisimulation -2}))}\\
&\leqslant R_i'\circ V_{B}'=V_{A}' \mbox{,}
\end{align*}
for every $i\in I$. From $\varphi^{-1} \circ V_{A} \leqslant V_{A} '$ we conclude that $\varphi(u,u')\leqslant V_{A}(u) \rightarrow V_{A}'(u')$. Thus, we conclude that $\varphi (u,u') \leqslant V_{A}'(u') \rightarrow V_{A}(u)$, for all $u\in W$ and $u'\in W'$, which means that
\[
\varphi(u,u')\leqslant V_{A}(u)\leftrightarrow V_{A}'(u'),
\]
for all $u\in W$ and $u'\in W'$. Therefore, we have proved that (\ref{eq:to.be.proved}) is also true for $A=\lozenge_i B$.

d) Suppose that $A=\square_i B$ and (\ref{eq:to.be.proved}) holds for $B$. In a similar way as in c), from $\varphi(u,u')\leqslant V_B (u) \leftrightarrow V_B'(u')$, for all $u\in W$ and $u'\in W'$, we obtain
\[
\varphi^{-1} \circ V_{B} \leqslant V_{B}', \quad \varphi \circ V_{B}' \leqslant V_{B} .
\]
Since the underlying Heyting algebra is linearly ordered, values $\varphi(u,u')=\varphi^{-1}(u',u)$, $V_A(u)$ and $V_{A}'(u')$ can be com\-pared with each other, for all $u\in W$, $u'\in W'$, there\-fore, a case analysis can be used.

If $\varphi^{-1}(u',u)\leqslant V_A(u)\wedge V_{A}(u')$ and $V_A(u)\neq V_{A}'(u')$, then
\[
\varphi(u,u')=\varphi^{-1}(u',u)\leqslant V_A(u)\wedge V_{A}' (u')=V_A(u)\leftrightarrow V_{A}' (u').
\]
In case $V_A(u)= V_{A}' (u')$ we have that $V_A(u)\leftrightarrow V_{A}' (u')=1$, which again gives $\varphi (u,u')\leqslant V_A(u)\leftrightarrow V_{A}' (u')$.

Hence, we need to consider only the case when
\[
\varphi^{-1}(u',u)>V_A(u)\wedge V_{A}' (u').
\]
Without loss of generality, we can assume that $\varphi^{-1}(u',u)>V_A(u)$, and then we have:
\begin{align}
V_A(u)&=\varphi^{-1}(u',u) \wedge V_A(u)\nonumber\\
&=\varphi^{-1}(u',u) \wedge \bigwedge_{v\in W}(R_i (u,v)\rightarrow V_B(v))\nonumber\\
&=\bigwedge_{v\in W}\left[\varphi^{-1}(u',u) \wedge\bigl(R_i (u,v)\rightarrow V_B(v)\bigr)\right]\nonumber\\
&=\bigwedge_{v\in W}\left[\varphi^{-1}(u',u) \wedge\bigl(\varphi^{-1}(u',u) \wedge R_i (u,v)\rightarrow V_B(v)\bigr)\right]\nonumber\\
&=\varphi^{-1}(u',u) \wedge\bigwedge_{v\in W}\left[\varphi^{-1}(u',u) \wedge R_i (u,v)\rightarrow V_B(v)\right]\label{last equation in align}
\end{align}
In the third and fifth lines we have used the property
\[
x\wedge \left( \bigwedge_{i\in I} y_i \right) = \bigwedge_{i\in I}(x\wedge y_i),
\]
which holds for every index set $I$ in a complete Heyting algebra. In the fourth line, we have used the well-known equation that holds in Heyting algebras
\[
x\wedge(y\rightarrow z)=x\wedge(x\wedge y\rightarrow z).
\]

According to the starting assumption, $\varphi$ is a forward prebisimulation, so it satisfies \eqref{forward bisimulation -2}, i.e.
\[
\varphi^{-1} \circ R_{i} \leqslant R_{i}' \circ \varphi^{-1}\mbox{,} \qquad \mbox{for every }i\in I\mbox{.}
\]
Next, since $R_i'$ is image-finite, for each $v\in W$ we can find $v'\in W'$ such that
\[
\varphi^{-1}(u',u) \wedge R_i (u,v) \leqslant R_i' (u',v')\wedge \varphi^{-1}(v',v),
\]
and it follows
\[
\bigl(\varphi^{-1}(u',u) \wedge R_i (u,v)\bigr)\rightarrow V_B(v) \geqslant \bigl(\varphi^{-1}(v',v) \wedge R_i' (u',v')\bigr)\rightarrow V_B(v)\mbox{.}
\]
Now, the two cases need to be analyzed. If $V_B(v)=V_B'(v')$, then
\[
R_i'(u',v') \rightarrow V_B(v)=R_i'(u',v') \rightarrow V_B'(v')\mbox{.}
\]
Since
\[
\bigl(\varphi^{-1}(v',v)\wedge R_i'(u',v')\bigr) \rightarrow V_B(v) \geqslant
 R_i'(u',v') \rightarrow V_B(v)\mbox{,}
\]
it follows
\[
\bigl(\varphi^{-1}(v',v)\wedge R_i'(u',v')\bigr) \rightarrow V_B(v) \geqslant
 R_i'(u',v') \rightarrow V_B'(v')\mbox{.}
\]
On the other hand, if $V_B(v) \neq V_B'(v')$, then by the induc\-tion hypothesis we have that
\[
\varphi^{-1}(v',v)\leqslant(V_B(v)\leftrightarrow V_B'(v'))\leqslant V_B(v).
\]
Thus,
\[
\bigl(\varphi^{-1}(v',v)\wedge R_i'(u',v')\bigr)\rightarrow V_B(v)=1 \geqslant R_i '(u',v')\rightarrow V_B'(v')\mbox{.}
\]
In both cases, we have shown that for any $v\in W$, we can find $v'$ so that
\[
\bigl(\varphi^{-1}(u',u)\wedge R_i (u,v)\bigr)\rightarrow V_B(v)\geqslant R_i'(u', v')\rightarrow V_B'(v').
\]
Therefore,
\[
\bigwedge_{v\in W} \bigl(\varphi^{-1}(u',u)\wedge R_i(u,v)\bigr)\rightarrow V_B(v)
\geqslant\bigwedge_{v'\in W'} R_i'(u',v')\rightarrow V_B'(v')
=V_{A}'(u')
\]
and using \eqref{last equation in align} we conclude:
$
V_A(u)\geqslant \varphi^{-1}(u',u)\wedge V_{A}'(u')\mbox{.}
$
Because of the assumption that $\varphi^{-1}(u',u)>V_A(u)$, we have
\[
V_A(u)\geqslant V_{A}'(u')\ \ \text{and}\ \ \varphi^{-1}(u',u)>V_{A}'(u').
\]

Analogously, by the same reasoning we can prove that $V_{A}'(u')\geqslant V_A(u)$, since $\varphi^{-1}(u',u)>V_{A}'(u')$. Hence, we have $V_A(u)=V_{A}'(v')$, and since $\varphi(u,u')=\varphi^{-1}(u',u)$ it follows
\[
\varphi(u,u')\leqslant V_A(u)\leftrightarrow V_{A}'(u')=1
\]
when $\varphi^{-1}(u',u)>V_A(u)\wedge V_{A}'(u')$.

This completes the proof of the statement that every forward prebisimulation is a weak $\PhiIHplus$-prebi\-simulation. This also means that every forward bisimulation is a weak $\PhiIHplus$-bisimulation, since the additional condi\-tions (\textit{fb-1}) and (\textit{wb-1}) that distinguish between prebisimulations and bisimulations are the same in both cases.
\end{proof}

\begin{lemma}\label{lemma - second lemma}
Under the assumptions of Theorem \ref{Hennessy-Milner Theorem for plus-formulae}, the greatest weak $\PhiIHplus$-prebisimulation (resp.~the greatest $\PhiIHplus$-bisimulation) between $\mathfrak{M}$ and $\mathfrak{M}'$, if it exists, is a forward prebisimulation (resp.~a forward bisimulation) between $\mathfrak{M}$ and $\mathfrak{M}'$.
\end{lemma}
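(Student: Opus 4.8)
The plan is to verify that the greatest weak $\PhiIHplus$-prebisimulation $\psi$ — which by Remark \ref{Remark - equivalent form of wb-2} has the explicit form $\psi(w,w')=\bigwedge_{A\in\PhiIHplus}(V_A(w)\leftrightarrow V_A'(w'))$, so that $\psi(w,w')\leqslant V_A(w)\leftrightarrow V_A'(w')$ for every $A\in\PhiIHplus$ — satisfies the defining conditions of a forward prebisimulation. Condition \eqref{forward bisimulation -3} is immediate: since $PV\subseteq\PhiIHplus$, taking $A=p\in PV$ above and using the adjunction property \eqref{adjunction property in Heyting algebra} gives $\psi^{-1}\circ V_p\leqslant V_p'$ and $\psi\circ V_p'\leqslant V_p$ (this is essentially Remark \ref{remark on which conditions strong and weak bisimulations coincide}). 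All the work goes into \eqref{forward bisimulation -2}. Since $\leftrightarrow$ is symmetric, $\psi^{-1}$ is the greatest weak $\PhiIHplus$-prebisimulation between $\mathfrak{M}'$ and $\mathfrak{M}$; as both models are image-finite, it therefore suffices to prove $\psi^{-1}\circ R_i\leqslant R_i'\circ\psi^{-1}$ for every $i\in I$, the companion inequality $\psi\circ R_i'\leqslant R_i\circ\psi$ following by interchanging the roles of $\mathfrak{M}$ and $\mathfrak{M}'$. Taking a join over the left variable, this reduces to the pointwise claim that $\psi(u,u')\wedge R_i(u,v)\leqslant\bigvee_{v'\in W'}R_i'(u',v')\wedge\psi(v,v')$ for all $u,v\in W$, $u'\in W'$.

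To prove this I would put $t=\psi(u,u')\wedge R_i(u,v)$, discard the trivial case $t=0$, and then, using linearity of $\mathscr{H}$, assume towards a contradiction that $R_i'(u',v')\wedge\psi(v,v')<t$ for every $v'\in W'$. Image-finiteness of $\mathfrak{M}'$ makes $X=\{v'\in W'\mid R_i'(u',v')>0\}$ finite. For every $v'\in X$ with $R_i'(u',v')\geqslant t$ one then has $\psi(v,v')<t$, hence a distinguishing formula $B_{v'}\in\PhiIHplus$ with $V_{B_{v'}}(v)\leftrightarrow V_{B_{v'}}'(v')<t$; by linearity the values $a_{v'}=V_{B_{v'}}(v)$ and $b_{v'}=V_{B_{v'}}'(v')$ are distinct and $\min(a_{v'},b_{v'})<t$. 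The crucial device is to normalize $B_{v'}$ with a truth constant, replacing it by $E_{v'}:=\overline{a_{v'}}\rightarrow B_{v'}$ if $a_{v'}>b_{v'}$ and by $E_{v'}:=B_{v'}\rightarrow\overline{a_{v'}}$ if $a_{v'}<b_{v'}$; then $E_{v'}\in\PhiIHplus$ and, using (V2) and the linear-Heyting identity $x\rightarrow y=y$ for $x>y$, one gets $V_{E_{v'}}(v)=1$ while $V_{E_{v'}}'(v')=\min(a_{v'},b_{v'})<t$.

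Then I would take $E$ to be the conjunction of these finitely many $E_{v'}$ (and $E:=\overline{1}$ if there are none) and consider the plus-formula $C:=\lozenge_i E$. On one side, $V_C(u)\geqslant R_i(u,v)\wedge V_E(v)\geqslant t$ because $R_i(u,v)\geqslant t$ and $V_E(v)=1$. On the other side, $V_C'(u')=\bigvee_{v'\in X}R_i'(u',v')\wedge V_E'(v')$, and each summand is $<t$: if $R_i'(u',v')<t$ this is clear, while if $R_i'(u',v')\geqslant t$ then $\psi(v,v')<t$ by the contradiction hypothesis, so $v'$ contributed the factor $E_{v'}$ and $V_E'(v')\leqslant V_{E_{v'}}'(v')<t$; since $X$ is finite and $\mathscr{H}$ linear, $V_C'(u')<t$. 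Hence $V_C(u)\geqslant t>V_C'(u')$, so $V_C(u)\leftrightarrow V_C'(u')=V_C'(u')<t$, which contradicts $\psi(u,u')\leqslant V_C(u)\leftrightarrow V_C'(u')$ together with $t\leqslant\psi(u,u')$. This establishes \eqref{forward bisimulation -2}, so $\psi$ is a forward prebisimulation. For the bisimulation statement, I would note that any weak $\PhiIHplus$-bisimulation $\varphi_0$ satisfies $\varphi_0\leqslant\psi$, whence $V_p\leqslant V_p'\circ\varphi_0^{-1}\leqslant V_p'\circ\psi^{-1}$ and $V_p'\leqslant V_p\circ\varphi_0\leqslant V_p\circ\psi$; thus $\psi$ itself satisfies \eqref{weak bisimulation-1}, is therefore the greatest weak $\PhiIHplus$-bisimulation, and — since \eqref{weak bisimulation-1} coincides with \eqref{forward bisimulation -1} — a forward bisimulation by the above.

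The main obstacle is precisely the construction and exploitation of the single plus-formula $C=\lozenge_i E$: one must build a plus-formula that, along $R_i$ from $u$, sees $v$ with degree at least $t$, yet along $R_i'$ from $u'$ cannot be matched with degree $t$. Image-finiteness is what lets one finite conjunction suffice, linearity of $\mathscr{H}$ is what converts ``not $\geqslant t$'' into ``$<t$'' and collapses the $\leftrightarrow$ and $\rightarrow$ computations, and the presence of the truth constants $\overline{H}$ in the language is what makes the normalization $V_{E_{v'}}(v)=1$ possible.
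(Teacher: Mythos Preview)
Your proof is correct and follows essentially the same route as the paper: both argue by contradiction, use image-finiteness of $R_i'$ to restrict to a finite set of successors, pick for each ``bad'' successor a distinguishing plus-formula, normalize it with a truth constant so that it takes value $1$ at $v$ but a value below the threshold at $v'$, take the finite conjunction, and apply $\lozenge_i$. The only cosmetic difference is the normalization device: the paper uses the single biimplication $A_{v'}\leftrightarrow\overline{z_{v'}}$ with $z_{v'}=V(v,A_{v'})$, whereas you do a case split on which of $a_{v'},b_{v'}$ is larger and use a one-directional implication; in a linearly ordered Heyting algebra these produce the same value, so the arguments are equivalent. Your treatment of the bisimulation clause (deducing \eqref{weak bisimulation-1} for $\psi$ from its validity for any $\varphi_0\leqslant\psi$) is slightly more explicit than the paper's, which simply observes that \eqref{forward bisimulation -1} and \eqref{weak bisimulation-1} coincide.
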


\begin{proof}
Let $\varphi $ be a weak $\PhiIHplus$-prebisimulation. According to Remark \ref{remark on which conditions strong and weak bisimulations coincide}, $\varphi$ satisfies the condition \eqref{forward bisimulation -3}. Hence, it remains to prove that \eqref{forward bisimulation -2} is true.

To prove that, we will use the proof by a contradiction and the same method used in Lemma 2 from \cite{Fan2015}. Namely, we will prove the assumption that (\textit{wb-2}) is true while (\textit{fb-2}) is not true, which leads to a contradiction. Therefore, let us assume that (\textit{fb-2}) does not hold. This means that there exists $i\in I$ so that
\begin{equation}\label{eq:2cases}
\varphi^{-1}\circ R_i\not\leqslant R_i'\circ\varphi^{-1}\qquad \text{or}\qquad \varphi\circ R_i'\not\leqslant R_i\circ\varphi\mbox{,}
\end{equation}
for some $i\in I$. We will consider only the case
\begin{equation}\label{eq:1.case}
\varphi^{-1}\circ R_i\not\leqslant R_i'\circ\varphi^{-1} ,
\end{equation}
for some $i\in I$, because the second case in (\ref{eq:2cases}) can be treated similarly. By the hypothesis, the under\-lying Heyting alge\-bra $\mathscr H$ is linearly ordered, so the formula (\ref{eq:1.case}) means that there are $u,v\in W$ and $u'\in W'$ such that
\begin{equation}\label{eq:greater}
\varphi^{-1}(u',u)\land R_i(u,v)>\bigvee_{v'\in W'}R_i'(u',v')\land\varphi^{-1}(v',v).
\end{equation}
Let $W_{u'}'=\{v'\in W'\mid R_i'(u',v')>0\}$. By the assumption of the theorem, $R_i'$ is image-finite, which means that $W_{u'}'$ is finite.

To simplify, let us set
\begin{align*}
    &x=\varphi^{-1}(u',u), \qquad  y=R_i(u,v),\\
    &x_{v'}=R_i'(u',v'),\ \qquad y_{v'}=\varphi^{-1}(v',v),
\end{align*}
for each $v'\in W_{u'}'$. Then, the formula (\ref{eq:greater}) becomes
\begin{equation}\label{eq:greater2}
x\land y>\bigvee_{v'\in W_{u'}'}x_{v'}\land y_{v'}.
\end{equation}
Due to (\ref{eq:greater2}), for each $v'\in W_{u'}'$ we have that $x_{v'}\land y_{v'}<x\land y$, and because of the linearity of the ordering in $\mathscr H$, we get that either $x_{v'}<x\land y$ or $y_{v'}<x\land y$.

\underline{\textit{Case $y_{v'}<x\land y$:}} If $y_{v'}<x\land y$, i.e.,
\[
\varphi^{-1}(v',v)=\varphi(v,v')<x\land y,
\]
then by the definition of $\varphi=\varphi_{*}^{wb}$, for each $v'\in W_{u'}'$ there exists $A_{v'} \in \PhiIHplus$ such that
\[
(V(v, A_{v'})\leftrightarrow V' (v', A_{v'}))<x\wedge y.
\]
In fact, since the underlying algebra $\mathscr{H}$ is linearly ordered, then $(V(v, A_{v'})\leftrightarrow V' (v', A_{v'}))=V(v, A_{v'})\wedge V' (v', A_{v'})$, for $V(v, A_{v'})\neq V' (v', A_{v'})$ and then $A_{v'}$ can be any formula such that $V(v, A_{v'})< x\wedge y$ or $V' (v', A_{v'})< x\wedge y$.

Set $z_{v'}=V(v, A_{v'})$. Now we define $B_{v'}$, for each $v'\in W_{u'}'$, as follows:
\begin{equation}\label{definition of formula -B_u'}
 B_{v'} =
  \begin{cases}
    \ \overline{1}, & \text{if } x_{v'} <x\wedge y \\
    \ A_{v'} \leftrightarrow \overline{z_{v'}}, & \text{otherwise }
  \end{cases}
\end{equation}
Note that if $x_{v'}\geqslant x\wedge y$, then we have that
\begin{align*}
V'(v', B_{v'})&= V'(v', A_{v'} \leftrightarrow \overline{z_{v'}})\\
&=V'(v', A_{v'})\leftrightarrow V(v, A_{v'})<x\wedge y
\end{align*}
and
\begin{align*}
V(v, B_{v'})&=V(v, A_{v'}\leftrightarrow \overline{z_{v'}})\\
&=V(v, A_{v'})\leftrightarrow V(v, A_{v'})=1\mbox{.}
\end{align*}
Further, set $B=\bigwedge_{v'\in W_{u'}'} B_{v'}$. Then,
\begin{align*}
V'(u', \lozenge_i B)&=\bigvee_{v'\in W_{u'}'} R_i'(u',v')\wedge V'(v', B)\\
&=\bigvee_{v'\in W_{u'}'} x_{v'} \wedge V'(v', B)\mbox{.}
\end{align*}
Thus,
\begin{align*}
V'(u', \lozenge_i B)&\leqslant \left( \bigvee_{\substack{v'\in W_{u'}'\\x_{v'}< x\wedge y}} x_{v'}\right) \vee\left( \bigvee_{\substack{v'\in W_{u'}'\\x_{v'}\geqslant x\wedge y}} V'(v', B_{v'})\right)<x\wedge y\mbox{.}
\end{align*}
On the other hand,
\begin{align*}
V (u, \lozenge_i B)&=\bigvee_{v\in W} R_i(u,v)\wedge V(v,B)\\
&\geqslant R_i(u,v)\wedge V(v,B)=y\geqslant x\wedge y\mbox{.}
\end{align*}
Now, according to \eqref{equivalent form of wb-2}, we have
\begin{align*}
x=\varphi^{-1} (u',u)&\leqslant (V'(u', \lozenge_i B)\leftrightarrow V (u, \lozenge_i B))\\
&=V'(u', \lozenge_i B)\wedge V (u, \lozenge_i B)\\
&=V'(u', \lozenge_i B)<x\wedge y
\end{align*}
which represents a contradiction.

\underline{\textit{Case $x_{v'}<x\land y$:}} Set $B=\overline{1}$ ($B$ can also be any proposi\-tional formula that is a tautology, for example, $p\leftrightarrow p$).

In the same way as in the previous case, we conclude that
\[
V'(u', \lozenge_i B)<x\land y,\qquad V (u, \lozenge_i B)\geqslant y\geqslant x\land y,
\]
whence
\begin{align*}
x&=\varphi^{-1}(u',u)=V'(u', \lozenge_i B)\wedge V (u, \lozenge_i B)\\
&=V'(u', \lozenge_i B)<x\wedge y,
\end{align*}
and again we get a contradiction.

Therefore, in all cases, the assumption that \eqref{weak bisimulation-2} is true while \eqref{forward bisimulation -2} is not true leads to a contradiction, whence we finally conclude that (\textit{wb-2}) implies (\textit{fb-2}), i.e., that every weak $\PhiIHplus$-prebisimulation is a forward prebisimulation. Since the conditions (\textit{fb-1}) and (\textit{wb-1}) are the same, we also conclude that every weak $\PhiIHplus$-bisimulation is a forward bisimulation.

This completes the proof of the lemma, as well as the proof of the Theorem \ref{Hennessy-Milner Theorem for plus-formulae}.
\end{proof}

\begin{remark} Note that the proof of the Lemma \ref{lemma - second lemma} can be carried out by constructing the formula $\square_i B$ instead of $\lozenge_i B$. Here we give only the part of the proof that needs to be modified.
\end{remark}

\begin{proof}
Let $B=\bigwedge_{v'\in W_{u'}'} B_{v'}$. Then
\begin{align*}
V'(u', \square_i B)&=\bigwedge_{v'\in W_{u'}'} R_i'(u',v')\rightarrow V'(v', B)=\bigwedge_{v'\in W_{u'}'} x_{v'} \rightarrow V'(v', B)\\
&=\bigwedge_{v'\in W_{u'}'} x_{v'} \rightarrow V'\left(v', \bigwedge_{v'\in W_{u'}'} B_{v'}\right)\\
& \leqslant \bigwedge_{v'\in W_{u'}'} x_{v'} \rightarrow V'(v', B_{v'})\mbox{.}
\end{align*}
Thus,
\begin{align*}
V'(u', \square_i B)&= \left( \bigwedge_{\substack{v'\in W_{u'}'\\x_{v'}< x\wedge y}} x_{v'}\rightarrow V'(v', B_{v'}) \right) \wedge\left( \bigwedge_{\substack{v'\in W_{u'}'\\x_{v'}\geqslant x\wedge y}} x_{v'}\rightarrow V'(v', B_{v'})\right)\\
&\leqslant \left( \bigwedge_{\substack{v'\in W_{u'}'\\x_{v'}\geqslant x\wedge y}} x_{v'}\rightarrow V'(v', B_{v'})\right)=\left( \bigwedge_{\substack{v'\in W_{u'}'\\x_{v'}\geqslant x\wedge y}} V'(v', B_{v'})\right)<x\wedge y\mbox{.}
\end{align*}
On the other hand,
\begin{align*}
V (u, \square_i B)&=\bigwedge_{w\in W} R_i(u, w)\rightarrow V(w, B)\\
&\geqslant R_i(u, v)\rightarrow V(v, B)=y\rightarrow V(v, B)=1\mbox{.}
\end{align*}
Hence, by the definition of $\varphi=\varphi_{*} ^{wb}$ and \eqref{equivalent form of wb-2} we have
\begin{align*}
x=\varphi^{-1} (u',u)&\leqslant (V'(u', \square_i B)\leftrightarrow V (u, \square_i B))\\
&=(V'(u', \square_i B)\leftrightarrow 1=V'(u', \square_i B)<x\wedge y,
\end{align*}
which again represents a contradiction.
\end{proof}

In a similar way we prove the following two theorems:

\begin{theorem}[{The Hennessy-Milner type theorem for minus-for\-mulae}]\label{Hennessy-Milner Theorem for minus-formulae}
Let $\mathfrak{M}=(W, \{R_{i}\}_{i\in I}, V)$ and $\mathfrak{M}'=(W', \{R_{i}'\}_{i\in I}, V')$ be two domain-finite fuzzy Kripke models over a linearly or\-dered Heyting algebra $\mathscr H$. The greatest weak $\PhiIHminus$-prebisimulation (resp.~the greatest $\PhiIHminus$-bisimulation) between $\mathfrak{M}$ and $\mathfrak{M}'$, if it exists, is the greatest backward prebisimulation (resp.~the greatest backward bisimula\-tion) between $\mathfrak{M}$ and $\mathfrak{M}'$.
\end{theorem}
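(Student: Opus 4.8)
The plan is to deduce Theorem~\ref{Hennessy-Milner Theorem for minus-formulae} from Theorem~\ref{Hennessy-Milner Theorem for plus-formulae} by passing to the reverse models, rather than repeating the inductive argument of Lemmas~\ref{lemma - first lemma} and~\ref{lemma - second lemma}. The reduction rests on three observations. First, for each $i\in I$ the relation $R_i$ is domain-finite if and only if $R_i^{-1}$ is image-finite, so $\mathfrak{M}$ and $\mathfrak{M}'$ are domain-finite if and only if the reverse models $\mathfrak{M}^{-1}$ and $\mathfrak{M'}^{-1}$ are image-finite. Second, as recalled from~\cite{Stankovic2021} just before Lemma~\ref{Lemma - equality of compositions}, $\varphi$ is a backward (pre)bisimulation between $\mathfrak{M}$ and $\mathfrak{M}'$ if and only if $\varphi$ is a forward (pre)bisimulation between $\mathfrak{M}^{-1}$ and $\mathfrak{M'}^{-1}$. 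Third, since $(\PhiIHminus)^d=\PhiIHplus$, Proposition~\ref{Proposition - Duality for sets of formulae} gives that $\varphi$ is a weak $\PhiIHminus$-(pre)bisimulation between $\mathfrak{M}$ and $\mathfrak{M}'$ if and only if $\varphi$ is a weak $\PhiIHplus$-(pre)bisimulation between $\mathfrak{M}^{-1}$ and $\mathfrak{M'}^{-1}$.

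Chaining these equivalences, the greatest weak $\PhiIHminus$-prebisimulation (resp.\ the greatest $\PhiIHminus$-bisimulation) between $\mathfrak{M}$ and $\mathfrak{M}'$ is the very same fuzzy relation as the greatest weak $\PhiIHplus$-prebisimulation (resp.\ bisimulation) between the image-finite models $\mathfrak{M}^{-1}$ and $\mathfrak{M'}^{-1}$; by Theorem~\ref{Hennessy-Milner Theorem for plus-formulae} the latter coincides with the greatest forward prebisimulation (resp.\ bisimulation) between $\mathfrak{M}^{-1}$ and $\mathfrak{M'}^{-1}$, which by the second observation is exactly the greatest backward prebisimulation (resp.\ bisimulation) between $\mathfrak{M}$ and $\mathfrak{M}'$. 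Since each of the three translations leaves the underlying fuzzy relation unchanged, the set of weak $\PhiIHminus$-prebisimulations between $\mathfrak{M}$ and $\mathfrak{M}'$ literally equals the set of backward prebisimulations between them, so the ``if it exists'' proviso is respected automatically.

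If instead a self-contained argument is preferred, the route of Lemmas~\ref{lemma - first lemma} and~\ref{lemma - second lemma} dualizes verbatim: one shows by induction on the complexity of $A\in\PhiIHminus$ that any backward prebisimulation $\varphi$ satisfies $\varphi(u,u')\leqslant V_A(u)\leftrightarrow V_A'(u')$, the propositional and connective cases being literally the same and the modal cases $A=\lozengemi B$, $A=\squaremi B$ now using~\eqref{backward bisimulation -2} and Lemma~\ref{Lemma - equality of compositions} in place of~\eqref{forward bisimulation -2}; and, conversely, that the greatest weak $\PhiIHminus$-prebisimulation satisfies~\eqref{backward bisimulation -2}, by the same proof by contradiction as in Lemma~\ref{lemma - second lemma}, this time invoking domain-finiteness of $R_i'$ to make the set of $R_i'$-predecessors of the offending world finite and building a witness formula $\lozengemi B$ (or $\squaremi B$). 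I expect the only point requiring care, in either approach, to be the correct handling of the reversed accessibility relations hidden in clauses (V5) and (V6) --- that is, ensuring that it is domain-finiteness, not image-finiteness, that is used where Lemma~\ref{lemma - second lemma} used image-finiteness; no genuinely new idea beyond Theorem~\ref{Hennessy-Milner Theorem for plus-formulae} is involved.
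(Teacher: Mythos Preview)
Your proposal is correct and follows exactly the approach the paper itself indicates: after stating Theorems~\ref{Hennessy-Milner Theorem for minus-formulae} and~\ref{Hennessy-Milner Theorem for all formulae}, the authors note that ``Theorem~\ref{Hennessy-Milner Theorem for minus-formulae} follows from Theorem~\ref{Hennessy-Milner Theorem for plus-formulae} based on the duality between the Kripke models and their reverse models,'' which is precisely your reduction via the three observations (domain-finite $\leftrightarrow$ reverse image-finite, backward $\leftrightarrow$ reverse forward, and Proposition~\ref{Proposition - Duality for sets of formulae}). Your write-up is in fact more explicit than the paper's one-line remark, and the optional self-contained dualization you sketch is also sound.
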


\begin{theorem}[The Hennessy-Milner type theorem for the set of all modal formulae]\label{Hennessy-Milner Theorem for all formulae}
Let $\mathfrak{M}=(W, \{R_{i}\}_{i\in I}, V)$ and $\mathfrak{M}'=(W', \{R_{i}'\}_{i\in I}, V')$ be two degree-finite fuzzy Kripke models over a linearly ordered Heyting algebra $\mathscr H$. The greatest weak $\PhiIH$-prebisimula\-tion (resp.~the greatest $\PhiIH$-bisi\-mu\-la\-tion) between $\mathfrak{M}$ and $\mathfrak{M}'$, if it exists, is the greatest regular prebisimulation (resp.~the greatest regular bisimula\-tion) between $\mathfrak{M}$ and $\mathfrak{M}'$.
\end{theorem}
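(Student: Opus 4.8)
The plan is to follow the two-lemma scheme of Theorem~\ref{Hennessy-Milner Theorem for plus-formulae}, replacing the forward (pre)bisimulation by the regular one and $\PhiIHplus$ by $\PhiIH$. Thus I would establish: (A) every regular prebisimulation (resp.\ bisimulation) between $\mathfrak{M}$ and $\mathfrak{M}'$ is a weak $\PhiIH$-prebisimulation (resp.\ $\PhiIH$-bisimulation); and (B) the greatest weak $\PhiIH$-prebisimulation (resp.\ $\PhiIH$-bisimulation), if it exists, is a regular prebisimulation (resp.\ regular bisimulation). By (A), the greatest regular prebisimulation $\varphi^{rb}$ is below the greatest weak $\PhiIH$-prebisimulation $\varphi_*^{wb}$, while (B) gives the reverse inequality; hence they coincide (and one exists iff the other does). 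Exactly as in Theorem~\ref{Hennessy-Milner Theorem for plus-formulae}, Remark~\ref{remark on which conditions strong and weak bisimulations coincide} disposes of condition \eqref{regular bisimulation -3} for free since $PV\subseteq\PhiIH$, so everything reduces to matching \eqref{regular bisimulation -2} against \eqref{weak bisimulation-2}.

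For (A) I would prove $\varphi(u,u')\leqslant V_A(u)\leftrightarrow V_A'(u')$ for all $u\in W$, $u'\in W'$ and all $A\in\PhiIH$ by induction on the complexity of $A$, mirroring Lemma~\ref{lemma - first lemma}. The base case and the $\wedge$- and $\rightarrow$-steps are verbatim the same. For $A=\lozenge_i B$ and $A=\square_i B$ I would use the ``forward'' halves of \eqref{regular bisimulation -2}, i.e.\ $\varphi^{-1}\circ R_i\leqslant R_i'\circ\varphi^{-1}$ and $\varphi\circ R_i'\leqslant R_i\circ\varphi$, together with image-finiteness of the models; this is precisely cases c) and d) of Lemma~\ref{lemma - first lemma}. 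For $A=\lozengemi B$ and $A=\squaremi B$ I would write $V_{\lozengemi B}=R_i^{-1}\circ V_B$ and $V_{\squaremi B}(w)=\bigwedge_{u\in W}R_i(u,w)\rightarrow V_B(u)$, and observe that inverting \eqref{regular bisimulation -2} yields $\varphi^{-1}\circ R_i^{-1}=(R_i')^{-1}\circ\varphi^{-1}$ and $\varphi\circ (R_i')^{-1}=R_i^{-1}\circ\varphi$, which are exactly what cases c) and d) require after the substitution $R_i\mapsto R_i^{-1}$; here domain-finiteness is the hypothesis used in place of image-finiteness. Equivalently, by Proposition~\ref{Proposition - Duality for sets of formulae} and the self-duality of regular bisimulations, the $\squaremi,\lozengemi$ cases over $\mathfrak{M},\mathfrak{M}'$ become the $\square_i,\lozenge_i$ cases over $\mathfrak{M}^{-1},\mathfrak{M'}^{-1}$, already handled. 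The bisimulation version then follows because \eqref{regular bisimulation -1} and \eqref{weak bisimulation-1} are literally the same condition.

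For (B) I would argue by contradiction as in Lemma~\ref{lemma - second lemma}. Let $\varphi=\varphi_*^{wb}$; by Remark~\ref{remark on which conditions strong and weak bisimulations coincide} it satisfies \eqref{regular bisimulation -3}, so suppose \eqref{regular bisimulation -2} fails for some $i\in I$. Since \eqref{regular bisimulation -2} is the conjunction of the two forward inclusions $\varphi^{-1}\circ R_i\leqslant R_i'\circ\varphi^{-1}$, $\varphi\circ R_i'\leqslant R_i\circ\varphi$ and the two backward inclusions $R_i\circ\varphi\leqslant\varphi\circ R_i'$, $R_i'\circ\varphi^{-1}\leqslant\varphi^{-1}\circ R_i$, at least one of these four fails. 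If a forward inclusion fails, linearity of $\mathscr H$ produces worlds witnessing a strict inequality as in \eqref{eq:greater}, image-finiteness makes the relevant set $W_{u'}'$ finite, and I would construct the same separating formula $\lozenge_i B$ (or $\square_i B$) as in Lemma~\ref{lemma - second lemma}, now choosing the subformulae $A_{v'}$ from all of $\PhiIH$ (legitimate since $\varphi=\varphi_*^{wb}$ is computed over $\PhiIH$); as $\lozenge_i B,\square_i B\in\PhiIH$ this contradicts \eqref{maximal weak prebisimulation}. If a backward inclusion fails, I would run the dual construction with a separating formula $\lozengemi B$ (or $\squaremi B$) in $\PhiIH$ using domain-finiteness, the contradiction again coming from \eqref{equivalent form of wb-2} and \eqref{maximal weak prebisimulation}; alternatively this case follows from the forward case via Proposition~\ref{Proposition - Duality for sets of formulae}. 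Either way we reach a contradiction, so \eqref{regular bisimulation -2} holds; the bisimulation case follows since \eqref{regular bisimulation -1} and \eqref{weak bisimulation-1} coincide.

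The step I expect to be the main obstacle is the bookkeeping around the ``minus'' modalities rather than any new idea: one must check that the delicate linearity-based case analysis of case d) of Lemma~\ref{lemma - first lemma} (the manipulation around \eqref{last equation in align}, handling a residuum against an infinite meet) survives the substitution $R_i\mapsto R_i^{-1}$, that \emph{domain}-finiteness is precisely the finiteness hypothesis consumed there, and, in (B), that a single failure of \eqref{regular bisimulation -2} can always be localized to one of the two directions and hence witnessed by a formula built from a single modality. Leaning on the self-duality of regular bisimulations together with Proposition~\ref{Proposition - Duality for sets of formulae} is the cleanest way to avoid writing these arguments out twice.
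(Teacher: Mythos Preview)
Your proposal is correct and matches the paper's own (terse) treatment: the paper simply says that Theorem~\ref{Hennessy-Milner Theorem for all formulae} is proved ``in a similar way'' and is ``a direct consequence of Theorems~\ref{Hennessy-Milner Theorem for plus-formulae} and~\ref{Hennessy-Milner Theorem for minus-formulae}'', and your two-lemma scheme---extending the induction of Lemma~\ref{lemma - first lemma} to the $\lozengemi,\squaremi$ cases via the substitution $R_i\mapsto R_i^{-1}$ (using domain-finiteness), and splitting the failure of \eqref{regular bisimulation -2} in Lemma~\ref{lemma - second lemma} into its forward (fb-2) and backward (bb-2) halves---is exactly how one cashes this out. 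One small caveat: your ``Equivalently'' aside in part~(A), reducing the $\lozengemi,\squaremi$ cases to $\lozenge_i,\square_i$ over the reverse models via Proposition~\ref{Proposition - Duality for sets of formulae}, does not quite work \emph{inside} the induction, since the subformula $B\in\PhiIH$ may itself mix plus and minus modalities and hence $V^{\mathfrak{M}^{-1}}_B\neq V^{\mathfrak{M}}_B$ in general; but your primary argument by direct substitution is correct and self-contained, so nothing is lost.
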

Also note that the Theorem \ref{Hennessy-Milner Theorem for minus-formulae} follows from the Theorem \ref{Hennessy-Milner Theorem for plus-formulae} based on the duality between the Kripke models and their reverse models. The Theorem \ref{Hennessy-Milner Theorem for all formulae} is a direct consequence of the Theorems \ref{Hennessy-Milner Theorem for plus-formulae} and \ref{Hennessy-Milner Theorem for minus-formulae}.

\begin{remark}\label{remark - first Lemma does not hold for presimulations}
Lemma \ref{lemma - first lemma} generally does not hold in the case of pre\-simulations for some set $\Psi$, i.e., inequality $\varphi_{*}^{fs}(w,w')\leqslant \varphi_{*}^{ws} (w,w')$, does not hold.
\end{remark}

For example, if a formula $\alpha$ is of the form $A\rightarrow B$ and the result holds for $A$ and $B$, using the adjunction property \eqref{adjunction property in Heyting algebra} we have
\begin{align*}
\varphi_{*}^{fs}(w,w')&\leqslant V_A(w) \rightarrow V_A'(w')\mbox{,}\\
\varphi_{*}^{fs}(w,w')&\leqslant V_B(w) \rightarrow V_B'(w')\mbox{,}
\end{align*}
for every $w\in W$ and $w'\in W'$. Hence, we have
$$
\varphi_{*}^{fs}(w, w')\leqslant (V_A (w)\rightarrow V_A'(w'))\wedge (V_B(w) \rightarrow V_B'(w'))\mbox{.}
$$

But, we want to prove $\varphi(w,w')\leqslant (V_A (w)\rightarrow V_B(w))\wedge (V_A' (w')\rightarrow V_B'(w'))$ and for that, we need the property
$(x_1\rightarrow y_1)\wedge (x_2\rightarrow y_2)\leqslant (x_1\rightarrow x_2)\wedge (y_1\rightarrow y_2)\mbox{,}$
which simply does not hold in the linearly ordered Heyting algebra. To make sure, we can take a G\"{o}del $[0,1]$ structure and the following values, $x_1=0.7$, $y_1=0.8$, $x_2=0.6$ and $y_2=0.7$.

However, this does not mean that the Hennessy-Milner property is not valid for fuzzy simulations in another logic. For example, in \cite{Nguyen2021} the Hen\-nessy-Milner property for fuzzy simulations was given for Fuzzy Labelled Transition Systems in Fuzzy Propositional Dynamic Logic.

\section{Hennessy-Milner type theorems for Propositional Modal Logics}\label{Section - Hennessy-Milner type theorems for Propositional Modal Logics}

The given definitions of simulations and bisimulations as well as the Hen\-nessy-Milner type theorems also apply in special cases, such as Propositional Modal Logic. We expand basic modal language (see, for example, \cite{Blackburn2001}) with inverse modal operators.

\begin{definition}
Let $\mathscr{B}=(B, \wedge, \vee, \rightarrow, 0, 1)$ be a two-element Boolean algebra and write $\overline{B}=\{\overline{t}\mid t\in B\}$ for the elements of $\mathscr{B}$ viewed as constants. Define the language $\Phi_{\mathscr{B}}$ via the grammar
\[
A::=\overline{t}\mid p\mid A\wedge A\mid A\rightarrow A\mid \lozenge A\mid \lozenge^{-} A
\]
where $\overline{t}\in \overline{B}$ and $p$ ranges over some set $PV$ of proposition letters.
\end{definition}
We also use standard abbreviations:
\begin{flalign*}
&\neg A \equiv A\rightarrow \overline{0}\mbox{ (\textit{negation})},&&\\
&A \leftrightarrow B \equiv (A\rightarrow B) \wedge (B\rightarrow A)\mbox{ (\textit{equivalence}),}&&\\
&A\vee B \equiv \neg (\neg A \wedge \neg B)\mbox{ (\textit{disjunction}),}&&\\
&\square A \equiv \neg \lozenge \neg A\mbox{ (\textit{necessity operator}),}&&\\
&\square^{-} A \equiv \neg \lozenge^{-} \neg A\mbox{ (\textit{inverse necessity operator}).}&&
\end{flalign*}

Let PML$^+$ be the set of all formulae with modality $\lozenge$ and its dual operator $\square$, PML$^-$ be the set of all formulae for propositional modal logics with converse modality $\lozenge^-$ and its dual operator $\square^-$. Finally, let PML denotes the set of all formulae for propositional modal logic with modalities $\lozenge$ and $\lozenge^-$ and with their dual operators $\square$ and $\square^-$, respectively. 

Now, using the fact that a weak (pre)bisimulation is logical equivalence on a set of formulae, then the Hennessy-Milner theorems can be reformulated as follows:

\begin{theorem}[The Hennessy-Milner theorem for PML$^+$]\label{Hennessy-Milner Theorem for PML+}
Let $\mathfrak{M}=(W, R, V)$ and $\mathfrak{M}'=(W', R', V')$ be two image-finite {\rm PML}$^+$ models. Models $\mathfrak{M}$ and $\mathfrak{M}'$ are {\rm PML}$^+$-equiv\-a\-lent if and only if they are forward bisimilar.
\end{theorem}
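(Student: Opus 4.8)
The plan is to derive this theorem as the specialization of Theorem~\ref{Hennessy-Milner Theorem for plus-formulae} to the two-element Boolean algebra $\mathscr{B}$. First I would record that $\mathscr{B}$ is a complete linearly ordered Heyting algebra (with $\otimes=\wedge$), that a PML$^+$ model is precisely a fuzzy Kripke model over $\mathscr{B}$ whose index set $I$ is a singleton --- so that, identifying $\lozenge$ with $\lozenge_i$ and $\square$ with $\square_i$, the language PML$^+$ coincides with $\Phi_{I,\mathscr{B}}^{+}$ --- and that image-finiteness of a crisp accessibility relation (Definition~\ref{Definition - image, domain and degree finite relation}) is exactly the classical finiteness of the successor set. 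Hence Theorem~\ref{Hennessy-Milner Theorem for plus-formulae}, together with Lemmas~\ref{lemma - first lemma} and~\ref{lemma - second lemma}, applies verbatim, and what remains is to translate the statement ``greatest weak PML$^+$-prebisimulation $=$ greatest forward prebisimulation'' into the asserted equivalence between PML$^+$-equivalence and forward bisimilarity of the two models.

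For the implication forward bisimilar $\Rightarrow$ PML$^+$-equivalent, I would pick a forward bisimulation $\varphi$ witnessing forward bisimilarity, i.e.\ one such that every $w\in W$ is $\varphi$-related to some $w'\in W'$ and conversely. By Lemma~\ref{lemma - first lemma}, $\varphi$ is a weak PML$^+$-bisimulation, so \eqref{eq:to.be.proved} gives $\varphi(u,u')\leqslant V_A(u)\leftrightarrow V'_A(u')$ for every $A\in\mathrm{PML}^+$; since all truth values lie in $\{0,1\}$, $(u,u')\in\varphi$ forces $V(u,A)=V'(u',A)$ for all such $A$, i.e.\ $u$ and $u'$ are PML$^+$-equivalent. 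Together with the two-sided totality of $\varphi$, this is exactly Definition~\ref{Definition - formulae-equivalent} with $\Phi=\mathrm{PML}^+$.

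For the converse, I would let $\varphi$ be the crisp relation with $\varphi(w,w')=1$ iff $w$ and $w'$ are PML$^+$-equivalent. By Remark~\ref{Remark - equivalent form of wb-2}, in the Boolean setting this $\varphi$ is precisely the greatest weak PML$^+$-prebisimulation $\varphi_*^{wb}$ of \eqref{maximal weak prebisimulation}, since $\bigwedge_{A\in\mathrm{PML}^+}\bigl(V_A(w)\leftrightarrow V'_A(w')\bigr)$ equals $1$ exactly when $w$ and $w'$ are PML$^+$-equivalent and $0$ otherwise. PML$^+$-equivalence of the models makes $\varphi$ non-empty and total in both directions, and it also yields \eqref{weak bisimulation-1}: if $V(w,p)=1$, a PML$^+$-equivalent $w'$ satisfies $V'(w',p)=1$ and $(w,w')\in\varphi$, whence $V_p\leqslant V_p'\circ\varphi^{-1}$, and symmetrically $V_p'\leqslant V_p\circ\varphi$. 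Thus $\varphi$ is a weak PML$^+$-bisimulation. By Lemma~\ref{lemma - second lemma} the greatest weak PML$^+$-prebisimulation $\varphi=\varphi_*^{wb}$ is a forward prebisimulation, and since $\varphi$ additionally satisfies \eqref{weak bisimulation-1}, which coincides with (\textit{fb-1}), it is a forward bisimulation; being non-empty and total both ways, it witnesses forward bisimilarity of $\mathfrak{M}$ and $\mathfrak{M}'$.

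The modal-logical substance here is inherited entirely from Theorem~\ref{Hennessy-Milner Theorem for plus-formulae}, so the main obstacle I anticipate is the bookkeeping: correctly rephrasing the ``greatest (pre)bisimulation'' formulation of the general theorem as the ``there exists a two-sided-total bisimulation'' formulation natural for PML$^+$ models, and in particular checking that PML$^+$-equivalence of $\mathfrak{M}$ and $\mathfrak{M}'$ is exactly the hypothesis that promotes the always-existing greatest weak prebisimulation to a genuine weak bisimulation satisfying (\textit{fb-1}). Once that dictionary is in place, both directions follow immediately from Lemmas~\ref{lemma - first lemma} and~\ref{lemma - second lemma}.
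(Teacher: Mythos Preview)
Your proposal is correct and follows exactly the route the paper intends: the paper presents Theorem~\ref{Hennessy-Milner Theorem for PML+} simply as the Boolean specialization of Theorem~\ref{Hennessy-Milner Theorem for plus-formulae}, with no separate proof given beyond the remark that ``a weak (pre)bisimulation is logical equivalence on a set of formulae.'' Your write-up makes explicit the dictionary between the ``greatest (pre)bisimulation'' phrasing of Theorem~\ref{Hennessy-Milner Theorem for plus-formulae} and the model-level equivalence/bisimilarity phrasing of Definition~\ref{Definition - formulae-equivalent}, which the paper leaves implicit; in particular your verification that $\mathrm{PML}^+$-equivalence of the models is precisely what promotes $\varphi_*^{wb}$ to a genuine weak bisimulation satisfying (\textit{wb-1}) is the one step the paper glosses over.
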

It follows from the theorem that if the worlds $w$ and $w'$ are PML$^+$-equiv\-a\-lent, then they are forward bisimilar. Thus, we obtain the Theorem 2.24 from \cite{Blackburn2001}, p.~69.

\begin{theorem}[The Hennessy-Milner theorem for PML$^-$]\label{Hennessy-Milner Theorem for PML-}
Let $\mathfrak{M}=(W, R, V)$ and $\mathfrak{M}'=(W', R', V')$ be two do\-main-fi\-nite {\rm PML}$^-$ models. Models $\mathfrak{M}$ and $\mathfrak{M}'$ are {\rm PML}$^-$-equivalent if and only if they are backward bisimilar.
\end{theorem}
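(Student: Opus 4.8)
The plan is to read off Theorem~\ref{Hennessy-Milner Theorem for PML-} as the Boolean specialization of the fuzzy Hennessy-Milner type theorem for minus-formulae, Theorem~\ref{Hennessy-Milner Theorem for minus-formulae}, exactly as Theorem~\ref{Hennessy-Milner Theorem for PML+} was obtained from Theorem~\ref{Hennessy-Milner Theorem for plus-formulae}. First I would note that the two-element Boolean algebra $\mathscr{B}$ is a complete linearly ordered Heyting algebra (it is just the chain $0<1$), so the whole development of Sections~\ref{Section - Simulations and bisimulations} and~\ref{Section - Hennessy-Milner type theorems for fuzzy multimodal logics} applies to it. A {\rm PML}$^-$ model is precisely a fuzzy Kripke $1$-model over $\mathscr{B}$ (taking $|I|=1$) with crisp accessibility relation and crisp valuation, and {\rm PML}$^-$ is exactly the set $\PhiIHminus$ of minus-formulae for this single modality; moreover, for a crisp relation $R$ the set $\{a\mid R(a,b)>0\}$ coincides with the set of $R$-predecessors of $b$, so ``domain-finite {\rm PML}$^-$ model'' in the statement has the same meaning as in Definition~\ref{Definition - image, domain and degree finite relation}. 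Thus Theorem~\ref{Hennessy-Milner Theorem for minus-formulae}, together with the minus-formula analogues of Lemmas~\ref{lemma - first lemma} and~\ref{lemma - second lemma}, is available with $\Psi={\rm PML}^-$.

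For the implication ``{\rm PML}$^-$-equivalent $\Rightarrow$ backward bisimilar'' I would work with the greatest weak ${\rm PML}^-$-prebisimulation $\varphi_*^{wb}(u,u')=\bigwedge_{A\in{\rm PML}^-}\bigl(V_A(u)\leftrightarrow V_A'(u')\bigr)$ from Remark~\ref{Remark - equivalent form of wb-2}. By the minus analogue of Lemma~\ref{lemma - second lemma}, $\varphi_*^{wb}$ is a backward prebisimulation; since $\mathfrak{M}$ and $\mathfrak{M}'$ are ${\rm PML}^-$-equivalent \emph{as models} (Definition~\ref{Definition - formulae-equivalent}), each $u\in W$ is ${\rm PML}^-$-equivalent to some $u'\in W'$ and conversely, and for any such pair $\varphi_*^{wb}(u,u')=1$ because all the biconditionals equal $1$; this is exactly what is needed to verify the remaining conditions \eqref{weak bisimulation-1} (which for a crisp valuation coincide with (\textit{bb-1})), so $\varphi_*^{wb}$ is in fact a backward bisimulation. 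Finally, for a ${\rm PML}^-$-equivalent pair $w,w'$ we get $(w,w')\in\varphi_*^{wb}$, and running the same argument with the roles of $\mathfrak{M}$ and $\mathfrak{M}'$ exchanged shows that $\varphi_*^{wb}$ is a backward bisimulation that is total on both sides, i.e. it witnesses backward bisimilarity of the two models.

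Conversely, suppose $\mathfrak{M}$ and $\mathfrak{M}'$ are backward bisimilar via a backward bisimulation $\varphi$ (total on both sides). By the minus analogue of Lemma~\ref{lemma - first lemma}, $\varphi$ is a weak ${\rm PML}^-$-bisimulation, hence $\varphi(w,w')\leqslant V_A(w)\leftrightarrow V_A'(w')$ for every $A\in{\rm PML}^-$; as $\varphi$ is crisp, $(w,w')\in\varphi$ forces $V_A(w)\leftrightarrow V_A'(w')=1$, i.e. $V_A(w)=V_A'(w')$ in $\mathscr{B}$, so related worlds are ${\rm PML}^-$-equivalent and therefore the models are ${\rm PML}^-$-equivalent. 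An alternative and even shorter route, which I would mention, is to reduce directly to Theorem~\ref{Hennessy-Milner Theorem for PML+} via the reverse-model duality of Section~\ref{Section - Hennessy-Milner type theorems for fuzzy multimodal logics}: {\rm PML}$^-$ formulae in $\mathfrak{M}$ correspond to {\rm PML}$^+$ formulae in $\mathfrak{M}^{-1}$, $\mathfrak{M}$ is domain-finite iff $\mathfrak{M}^{-1}$ is image-finite, and $\varphi$ is a backward bisimulation between $\mathfrak{M},\mathfrak{M}'$ iff it is a forward bisimulation between $\mathfrak{M}^{-1},\mathfrak{M'}^{-1}$, so Theorem~\ref{Hennessy-Milner Theorem for PML+} applied to the reverse models yields the claim. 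The only genuinely delicate point in either route --- and the step I would be most careful about --- is the bookkeeping between ``the models are backward bisimilar'' (existence of a backward bisimulation that is total both ways and assigns degree $1$ to related worlds) and the ``greatest bisimulation'' phrasing of Theorem~\ref{Hennessy-Milner Theorem for minus-formulae}, together with the check that $\varphi_*^{wb}$ really does satisfy the one-sided conditions \eqref{weak bisimulation-1}, which uses the model-level rather than merely world-level {\rm PML}$^-$-equivalence hypothesis.
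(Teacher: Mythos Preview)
Your proposal is correct and follows essentially the same approach as the paper: the paper presents Theorem~\ref{Hennessy-Milner Theorem for PML-} as the Boolean-algebra specialization of Theorem~\ref{Hennessy-Milner Theorem for minus-formulae} (and notes, as you do, the alternative duality route via reverse models), without writing out a separate proof. Your write-up simply makes the implicit steps explicit---in particular the translation between model-level ${\rm PML}^-$-equivalence and the totality/$(\textit{bb-1})$ conditions for $\varphi_*^{wb}$---which is exactly the bookkeeping the paper leaves to the reader.
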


\begin{theorem}[The Hennessy-Milner theorem for PML]\label{Hennessy-Milner Theorem for PML}
Let $\mathfrak{M}=(W,\allowbreak R, V)$ and $\mathfrak{M}'=(W', R', V')$ be two degree-finite {\rm PML} models. Models $\mathfrak{M}$ and $\mathfrak{M}'$ are {\rm PML}-equivalent if and only if they are regular bisimilar.
\end{theorem}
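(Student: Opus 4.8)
The plan is to deduce this theorem from Theorem~\ref{Hennessy-Milner Theorem for all formulae} by viewing a {\rm PML} model as a degree-finite fuzzy Kripke model over the two-element Boolean algebra. First I would record that $\mathscr B=(B,\wedge,\vee,\rightarrow,0,1)$ is a complete linearly ordered Heyting algebra (its underlying lattice is the single chain $0<1$, and Boolean implication is its residuum), and that a {\rm PML} model $(W,R,V)$ is exactly a fuzzy Kripke model over $\mathscr B$ with index set $I=\{1\}$ once we identify $\lozenge,\square,\lozenge^{-},\square^{-}$ with $\lozenge_1,\square_1,\lozengemi,\squaremi$. Under this identification the language {\rm PML} is the instance of $\PhiIH$ with $|I|=1$ and $\mathscr H=\mathscr B$, and ``degree-finite {\rm PML} model'' coincides with ``degree-finite fuzzy Kripke model'' in the sense of Definition~\ref{Definition - image, domain and degree finite relation}; hence Theorem~\ref{Hennessy-Milner Theorem for all formulae} is available verbatim.

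Next I would identify the two relevant relations in the Boolean setting. By Remark~\ref{Remark - equivalent form of wb-2} the greatest weak $\PhiIH$-prebisimulation between $\mathfrak M$ and $\mathfrak M'$ is $\varphi_*^{wb}(w,w')=\bigwedge_{A\in\PhiIH}\bigl(V_A(w)\leftrightarrow V'_A(w')\bigr)$, and since every truth value occurring here lies in $\{0,1\}$, this equals $1$ exactly when $V_A(w)=V'_A(w')$ for all $A$ --- that is, when $w$ and $w'$ are {\rm PML}-equivalent --- and equals $0$ otherwise. So $\varphi_*^{wb}$ is precisely the crisp relation of {\rm PML}-equivalence between the worlds of $\mathfrak M$ and $\mathfrak M'$. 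Accordingly I read ``$\mathfrak M$ and $\mathfrak M'$ are regular bisimilar'' as the model-level notion parallel to Definition~\ref{Definition - formulae-equivalent}: there is a regular bisimulation $\varphi$ between them relating each world of $\mathfrak M$ to some world of $\mathfrak M'$ and conversely.

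For the forward implication, assuming $\mathfrak M$ and $\mathfrak M'$ are {\rm PML}-equivalent models, I would check that $\varphi_*^{wb}$ is in fact a weak $\PhiIH$-bisimulation, i.e.\ that it also satisfies \eqref{weak bisimulation-1}. This is immediate from model-equivalence: if $V_p(w)=1$, choosing a {\rm PML}-equivalent $w'$ gives $V'_p(w')=1$ and $\varphi_*^{wb}(w,w')=1$, so $V_p\leqslant V'_p\circ(\varphi_*^{wb})^{-1}$, and symmetrically for the other half. Since $\varphi_*^{wb}$ is moreover total in both directions, Theorem~\ref{Hennessy-Milner Theorem for all formulae} then tells us it is the greatest regular bisimulation, whence $\mathfrak M$ and $\mathfrak M'$ are regular bisimilar. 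For the converse, given a regular bisimulation $\varphi$ that is total in both directions, Theorem~\ref{Hennessy-Milner Theorem for all formulae} together with Remark~\ref{Remark - equivalent form of wb-2} gives $\varphi\leqslant\varphi_*^{wb}$ (every regular bisimulation is a weak $\PhiIH$-bisimulation, hence bounded above by the greatest solution of \eqref{weak bisimulation-2}); therefore $\varphi(w,w')=1$ forces $V_A(w)=V'_A(w')$ for all $A$, so $w$ and $w'$ are {\rm PML}-equivalent, and totality of $\varphi$ propagates this to the model level.

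The genuine work is bookkeeping rather than analysis, so the ``main obstacle'' is only to be scrupulous about two translations: that the {\rm PML} grammar really is the $|I|=1$, $\mathscr H=\mathscr B$ case of $\PhiIH$ (so that Theorem~\ref{Hennessy-Milner Theorem for all formulae} applies and its ``if it exists'' hypothesis is automatically met --- here $\varphi_*^{wb}$ always exists as a fuzzy relation and is non-empty whenever the models are {\rm PML}-equivalent), and that ``regular bisimilar models'' is understood via the totality condition above, matching the model-level formulation in Definition~\ref{Definition - formulae-equivalent}. All the nontrivial estimates over a linearly ordered Heyting algebra are already carried out in Lemmas~\ref{lemma - first lemma} and \ref{lemma - second lemma} and their minus- and all-formulae analogues underlying Theorem~\ref{Hennessy-Milner Theorem for all formulae}, so nothing new is required here.
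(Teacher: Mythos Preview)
Your proposal is correct and follows the same approach the paper takes: the paper presents Theorems~\ref{Hennessy-Milner Theorem for PML+}--\ref{Hennessy-Milner Theorem for PML} simply as reformulations of Theorems~\ref{Hennessy-Milner Theorem for plus-formulae}--\ref{Hennessy-Milner Theorem for all formulae} specialized to the two-element Boolean algebra, invoking only the observation that a weak (pre)bisimulation coincides with logical equivalence on the relevant set of formulae. You have spelled out the bookkeeping (the identification of $\mathscr B$ as a linearly ordered complete Heyting algebra, the translation of totality into condition~\eqref{weak bisimulation-1}, and the appeal to the analogues of Lemmas~\ref{lemma - first lemma} and~\ref{lemma - second lemma}) in more detail than the paper itself does, but the route is the same.
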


Also, an analogous statement as the Remark \ref{remark - first Lemma does not hold for presimulations} holds in Propositional Modal Logic, i.e., the Hennessy-Milner property is not valid for simulations.

\section{Computational examples}\label{Section - Computational examples}
In this section, we give examples which demonstrate the application of the Hen\-nessy-Milner-type theorems from the previous sections and clarify the relationships between different types of strong and weak bisimulations.

It is generally known that every linearly ordered Heyting algebra is a G\" odel algebra and every G\" odel algebra is a Heyting algebra with the Dummett condition $(x\rightarrow y)\vee (y\rightarrow x)=1$. Therefore, several examples are on the standard G\" odel modal logic over $[0,1]$ while the last example is on the Boolean two-valued structure.

\begin{example}\label{Example - First example with two relations}
Let $\mathfrak{M}=(W, \{R_{i}\}_{i\in I}, V)$ and $\mathfrak{M}'=(W', \{R_{i}'\}_{i\in I}, V')$ be two fuzzy Kripke models over the G\" odel structure, where $W=\{u,v,w\}$, $W'=\{v', w'\}$ and set $I=\{1,2\}$. Fuzzy relations $R_1, R_2$, $R_1 ', R_2'$ and fuzzy sets $V_p$, $V_q$, $V_p '$ and $V_q'$ are represented by the following fuzzy matrices and column vectors:
\begin{align*}
&R_1=\begin{bmatrix}
   1 & 0.3 & 1 \\
   0.6 & 0.4 & 0.6 \\
   1 & 0.4 & 1
  \end{bmatrix}\mbox{,}\quad
  R_2=\begin{bmatrix}
   0.8 & 0.5 & 0.8 \\
   0.6 & 0 & 0.6 \\
   0.9 & 0.5 & 0.9
  \end{bmatrix}\mbox{,}\quad V_p=\begin{bmatrix}
    1 \\
    0.6\\
    1
  \end{bmatrix}\mbox{,}\quad V_q=\begin{bmatrix}
    1 \\
    0.3\\
    1
  \end{bmatrix}\mbox{,}\\
&R_1'=\begin{bmatrix}
   1 & 0.4 \\
   0.6 & 0.4 \\
\end{bmatrix}\mbox{,}\quad
R_2'=\begin{bmatrix}
   0.9 & 0.5 \\
   0.6 & 0.3 \\
\end{bmatrix}\mbox{,}\quad
V_p'=\begin{bmatrix}
    1 \\
    0.6
\end{bmatrix}\mbox{,}\quad V_q'=\begin{bmatrix}
    1 \\
    0.3
\end{bmatrix}\mbox{.}
\end{align*}
Using algorithms for testing the existence and computing the greatest (pre)bi\-sim\-u\-lations between fuzzy Kripke models $\mathfrak{M}$ and $\mathfrak{M}'$ from \cite{Stankovic2021}, we have:
\begin{align*}
&\varphi_{*}^{fb}=\begin{bmatrix}
   0.3 & 0.3 \\
   0.3 & 0.3 \\
   0.3 & 0.3
 \end{bmatrix}\mbox{,} \quad
\varphi_{*}^{bb}=\varphi^{bb}=\begin{bmatrix}
   1 & 0.3 \\
   0.3 & 1 \\
   1 & 0.3
  \end{bmatrix}\mbox{,}\\
&\varphi_{*}^{fbb}=\varphi^{fbb}=\begin{bmatrix}
   1 & 0.3 \\
   0.3 & 1 \\
   1 & 0.3
 \end{bmatrix}\mbox{,}\quad
  \varphi_{*}^{bfb}=\begin{bmatrix}
   0.3 & 0.3 \\
   0.3 & 0.3 \\
   0.3 & 0.3
  \end{bmatrix}\mbox{,}\quad
  \varphi_{*}^{rb}=\begin{bmatrix}
   0.3 & 0.3 \\
   0.3 & 0.3 \\
   0.3 & 0.3
  \end{bmatrix}\mbox{,}
\end{align*}
and $\varphi^{fb}_*$, $\varphi^{bfb}_*$ and $\varphi^{rb}_*$ do not satisfy ($fb$-1), ($bfb$-1) and ($rb$-1), respectively, which means that $\varphi^{fb}$, $\varphi^{bfb}$ and $\varphi^{rb}$ do not exist.

According to the Theorem \ref{Hennessy-Milner Theorem for minus-formulae} and Definition \ref{Definition - formulae-equivalent}, it follows that models $\mathfrak{M}$ and $\mathfrak{M}'$ are $\PhiIHminus$-equivalent.

If we consider the reverse fuzzy Kripke models $\mathfrak{M}^{-1}$ and $\mathfrak{M}'^{-1}$, we have the opposite situation. Namely, in this case there are no ${bb}$-, $fbb$- and $rb$-bisimulations. In this case, according to the Theorem \ref{Hennessy-Milner Theorem for plus-formulae}, and Definition \ref{Definition - formulae-equivalent} it follows that models $\mathfrak{M}^{-1}$ and $\mathfrak{M}'^{-1}$ are $\PhiIHplus$-equivalent.
\end{example}

\begin{example}\label{Example - Second example with two relations}
Let us replace fuzzy relations $R_1, R_2$, $R_1 ', R_2'$ and fuzzy sets $V_p$, $V_q$, $V_p ', V_q'$ in the previous example with the following fuzzy matrices and column vectors:
\begin{align*}
&R_1=\begin{bmatrix}
   0 & 0.2 & 0.2 \\
   1 & 0.4 & 1 \\
   0 & 0.2 & 0
  \end{bmatrix}\mbox{,}\quad
  R_2=\begin{bmatrix}
   1 & 0.9 & 0.9 \\
   0.8 & 0.7 & 0.8 \\
   0.9 & 0.9 & 1
  \end{bmatrix}\mbox{,}\quad V_p=\begin{bmatrix}
    0.7 \\
    0.4\\
    0.7
  \end{bmatrix}\mbox{,}\quad V_q=\begin{bmatrix}
    0.8 \\
    1\\
    0.8
  \end{bmatrix}\mbox{,}\\
&R_1'=\begin{bmatrix}
   0.2 & 0.2 \\
   1 & 0.4 \\
\end{bmatrix}\mbox{,}\quad
R_2'=\begin{bmatrix}
   1 & 0.9 \\
   0.8 & 0.7 \\
\end{bmatrix}\mbox{,}\quad
V_p'=\begin{bmatrix}
    0.7 \\
    0.4
\end{bmatrix}\mbox{,}\quad V_q'=\begin{bmatrix}
    0.8 \\
    1
\end{bmatrix}\mbox{.}
\end{align*}
Using algorithms for testing the existence and computing the greatest (pre)bi\-sim\-u\-lations between fuzzy Kripke models $\mathfrak{M}$ and $\mathfrak{M}'$ from \cite{Stankovic2021}, we have:
\begin{align*}
&\varphi^{fb}=\begin{bmatrix}
   1 & 0.2 \\
   0.2 & 1 \\
   1 & 0.2
 \end{bmatrix}\mbox{,} \quad
\varphi^{bb}=\begin{bmatrix}
   1 & 0.4 \\
   0.4 & 1 \\
   1 & 0.4
  \end{bmatrix}\mbox{,}\\
&\varphi^{fbb}=\begin{bmatrix}
   1 & 0.4 \\
   0.2 & 1 \\
   1 & 0.4
 \end{bmatrix}\mbox{,}\quad
  \varphi^{bfb}=\begin{bmatrix}
   1 & 0.2 \\
   0.4 & 1 \\
   1 & 0.2
  \end{bmatrix}\mbox{,}\quad
  \varphi^{rb}=\begin{bmatrix}
   1 & 0.2 \\
   0.2 & 1 \\
   1 & 0.2
  \end{bmatrix}\mbox{.}
\end{align*}
In this example, all prebisimulations $\varphi^\theta_{*}$ for $\theta \in \{fb, bb, fbb, bfb, rb\}$ satisfy the condition ($\theta$-1).

According to the Theorem \ref{Hennessy-Milner Theorem for all formulae} and Definition \ref{Definition - formulae-equivalent}, it follows that models $\mathfrak{M}$ and $\mathfrak{M}'$ are $\PhiIH$-equivalent. Clearly, these models are also $\PhiIHplus$-equivalent and $\PhiIHminus$-equivalent.
\end{example}

\begin{example}
If we recall Example \ref{Example - first example}, we can conclude that the existence of a forward bisimulation does not imply that the models $\mathfrak{M}$ and $\mathfrak{M}'$ are $\PhiIHplus$-equivalent. According to the Definition \ref{Definition - formulae-equivalent}, we can conclude that worlds $v$ and $v'$ are $\PhiIHplus$-equivalent.

The following conclusion can be drawn based on all the above: for models to be logically equivalent, the weak bisimulation for set $\Phi$ must have at least one element $1$ in each of the rows and columns.

The situation from the Example \ref{Example - first example} can be interpreted in the following way: ``models $\mathfrak{M}$ and $\mathfrak{M}'$ are as $\PhiIHplus$-equivalent as they are forward bisimilar and vice versa''.
\end{example}

The following example illustrates the situation where fuzzy Kripke models are restricted to crisp values $\{0,1\}$.
\begin{example}\label{Example - crisp models}
Let $\mathfrak{M}=(W, R, V)$ and $\mathfrak{M}'=(W', R', V')$ be two Kripke models over the two-valued Boolean structure, where $W=\{t,u,v,w\}$ and $W'=\{v', w'\}$. Relations $R$, $R'$ and propositional variables $V_p$, $V_q$, $V_p '$ and $V_q'$ are represented by the following matrices and column vectors:
\begin{align*}
&R=\begin{bmatrix}
   0 & 0 & 0 & 0 \\
   1 & 0 & 0 & 0 \\
   0 & 0 & 0 & 1 \\
   0 & 0 & 0 & 0
  \end{bmatrix}\mbox{,}\quad V_p=\begin{bmatrix}
    1 \\
    0 \\
    0 \\
    1
  \end{bmatrix}\mbox{,}\quad V_q=\begin{bmatrix}
    1 \\
    1 \\
    1 \\
    1
  \end{bmatrix}\mbox{,}\\
&R'=\begin{bmatrix}
   0 & 0 \\
   1 & 0 \\
\end{bmatrix}\mbox{,}\quad
V_p'=\begin{bmatrix}
    1 \\
    0
\end{bmatrix}\mbox{,}\quad V_q'=\begin{bmatrix}
    1 \\
    1
\end{bmatrix}\mbox{.}
\end{align*}
Using algorithms for testing the existence and computing the greatest (pre)bi\-sim\-u\-lations between Kripke models $\mathfrak{M}$ and $\mathfrak{M}'$ from \cite{Stankovic2021}, we have:
\begin{align*}
&\varphi^{fb}=\varphi^{bb}=\varphi^{fbb}=\varphi^{bfb}=\varphi^{rb}=\begin{bmatrix}
   1 & 0 \\
   0 & 1 \\
   0 & 1 \\
   1 & 0
  \end{bmatrix}\mbox{.}
\end{align*}
Again, all prebisimulations $\varphi^\theta_{*}$ for $\theta \in \{fb, bb, fbb, bfb, rb\}$ satisfy the condition ($\theta$-1).

According to the Theorem \ref{Hennessy-Milner Theorem for PML} and Definition \ref{Definition - formulae-equivalent}, it follows that models $\mathfrak{M}$ and $\mathfrak{M}'$ are {\rm PML}-equivalent. Clearly, these models are also {\rm PML}$^{+}$-equivalent and {\rm PML}$^{-}$-equivalent.
\end{example}

\section{Concluding Remarks}\label{Concluding Remarks}

In this paper, we have combined the ideas about simulations and bisimulations for the fuzzy Kripke models from our previous work \cite{Stankovic2021} and the ideas about the Hennessy-Milner property for G\"{o}del Modal Logics from the work of T.F.~Fan \cite{Fan2015} to get the new results. Inter alia, weak (pre)simulations and weak (pre)bisimulations have been defined and some of their properties explained. Furthermore, using weak bisimulations, we have examined the formulae equivalence between the fuzzy Kripke models. We have showed that a weak bisimulation for the set of the plus-formulae between two image-finite fuzzy Kripke models is equal to a forward bisimulation between them. Using the principle of duality, a weak bisimulation for the set of minus-formulae is equal to a backward bisimulation between domain-finite Kripke models. Finally, we have determined a self-dual assertion that a weak bisimulation for the set of all formulae was equal to a regular bisimulation between degree-finite Kripke models.

\section{Acknowledgements}
This research was supported by the Science Fund of the Republic of Serbia, \# GRANT No.~7750185, Quantitative Automata Models: Fundamental Problems and Applications - QUAM.

The authors would like to thank the anonymous referees for their constructive suggestions that have significantly improved the paper.


\end{document}